\newtheorem{theorem}{Theorem}[section]
\newtheorem{lemma}[theorem]{Lemma}
\newtheorem{question}[theorem]{Question}
\newtheorem{observation}[theorem]{Observation}
 \DeclareMathOperator{\dist}{{\text{\sf{dist}}}}
\def\dd{\hbox{-}}   
\newcommand{\poi}{\mathbb{N}}
\newcommand{\ol}{\overline}
\newcommand{\mca}{\mathcal}
\newcommand{\is}{\text{-induced-saturated}}
\newcounter{tbox}
\newcommand{\sta}[1]{\medskip\medskip\refstepcounter{tbox}\noindent{\parbox{\textwidth}{(\thetbox) \emph{#1}}}\vspace*{0.3cm}}
\newcommand{\mylongtitle}[1]{%
  \ifodd\value{page}%
    \protect\parbox{0.97\linewidth}{#1}\hfill%
  \else%
    \hfill\protect\parbox{0.97\linewidth}{#1}%
  \fi%
}
\title[Halfway to induced saturation for even cycles]{Halfway to induced saturation for even cycles}
\author{Xinyue Fan$^{\dagger}$}
\author{Sahab Hajebi$^{\dagger}$}
\author{Sepehr Hajebi$^{\dagger}$}
\author{Sophie Spirkl$^{\dagger, \ddagger}$}
\thanks{$^{\dagger}$ Department of Combinatorics and Optimization, University of Waterloo, Waterloo, Ontario, Canada}
\thanks{$^{\ddagger}$ We acknowledge the support of the Natural Sciences and Engineering Research Council of Canada (NSERC), [funding reference number RGPIN-2020-03912].
Cette recherche a \'et\'e financ\'ee par le Conseil de recherches en sciences naturelles et en g\'enie du Canada (CRSNG), [num\'ero de r\'ef\'erence RGPIN-2020-03912]. This project was funded in part by the Government of Ontario. This research was conducted while Spirkl was an Alfred P. Sloan Fellow.}
\date {\today}
\date{\today}
\begin{document}

\maketitle


\begin{abstract}
For graphs $G$ and $H$, we say that $G$ is \textit{$H$-free} if no induced subgraph of $G$ is isomorphic to $H$, and that $G$ is \textit{$H\is$} if $G$ is $H$-free but removing or adding any edge in $G$ creates an induced copy of $H$. A full characterization of graphs $H$ for which $H\is$ graphs exist remains elusive. Even the case where $H$ is a path -- now settled by the collective results of Martin and Smith, Bonamy et al., and Dvo\'{r}\v{a}k -- was already quite challenging. 

What if $H$ is a cycle? The complete answer for odd cycles was given by Behren et al., leaving the case of even cycles (except for the $4$-cycle) wide open. Our main result is the first step toward closing this gap: We prove that for every even cycle $H$, there is a graph $G$ with at least one edge such that $G$ is $H$-free but removing any edge from $G$ creates an induced copy of $H$ (in fact, we construct $H\is$ graphs for every even cycle $H$ on at most 10 vertices).
\end{abstract}

\section{Introduction}
The set of all positive integers is denoted by $\poi$, and for every integer $k$, the set of all positive integers no greater than $k$ is denoted by $\poi_k$ (so we have $\poi_k=\varnothing$ if and only if $k\leq 0$). Graphs in this paper have finite vertex sets, and the edges of a graph are regarded as $2$-subsets of its vertex set; in particular, loops and parallel edges are not allowed. Let $G=(V(G), E(G))$ be a graph. The complement of $G$ is denoted by $\ol{G}$. For every $e\in E(G)$, we denote by $G-e$ the graph obtained from $G$ by removing the edge $e$, and for every $e\in E(\ol{G})$, we denote by $G+e$ the graph obtained from $G$ by adding the edge $e$. 

For graphs $G$ and $H$, we say that $G$ is \textit{$H$-free} if no induced subgraph of $G$ is isomorphic to $H$. We also say that $G$ is \textit{$H$-induced-saturated} if $G$ is $H$-free, $G-e$ has an induced subgraph isomorphic to $H$ for every $e\in E(G)$, and $G+e$ has an induced subgraph isomorphic to $H$ for every $e\in E(\ol{G})$. This notion originated in a 2012 work by Martin and Smith \cite{martin}, and has since evolved around the study of graphs $H$ for which $H\is$ graphs exist. For instance, if $|V(H)|\neq 2$ and one of $H$ or $\ol{H}$ is complete, then $H$-induced-saturated graphs do not exist. Could the converse also be true? Martin and Smith \cite{martin} answered this in the negative (for $t\in \poi$, we write $P_t$ to denote the $t$-vertex path):

\begin{theorem}[Martin and Smith \cite{martin}]\label{thm:p4}
    There is no $P_4\is$ graph.
\end{theorem}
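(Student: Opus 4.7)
The plan is to use the characterization of $P_4$-free graphs as cographs, combined with the observation that $P_4$ itself admits no twin pair. Recall that two distinct vertices $u,v$ of a graph are \emph{twins} if $N(u)\setminus\{v\}=N(v)\setminus\{u\}$; they are \emph{false twins} if additionally $u\not\sim v$ (so $N(u)=N(v)$) and \emph{true twins} if $u\sim v$ (so $N[u]=N[v]$). The first ingredient I will invoke is the standard fact that every cograph on at least two vertices contains a pair of twins: in the cotree of such a cograph, the children of any deepest internal node are all leaves, and any two such sibling leaves are false twins if the node is a union node and true twins if it is a join node.

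The second ingredient is the direct verification that $P_4$ has no twin pair. In a $P_4$ with consecutive vertices $v_1,v_2,v_3,v_4$, the open neighborhoods are $\{v_2\},\{v_1,v_3\},\{v_2,v_4\},\{v_3\}$, which are pairwise distinct; hence no two non-adjacent vertices are false twins, and a similarly quick look at closed neighborhoods shows no two adjacent vertices are true twins.

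With these ingredients, the argument runs as follows. Suppose for a contradiction that $G$ is $P_4\is$; then $G$ is $P_4$-free, hence a cograph, and (assuming $|V(G)|\geq 2$, as any sensible convention rules out smaller $G$) contains a twin pair $\{u,v\}$. If $u,v$ are false twins in $G$, induced saturation gives an induced $P_4$ in $G+uv$; but in $G+uv$ the vertices $u,v$ become true twins, so by the second ingredient they cannot both lie in this induced $P_4$. The $P_4$ therefore sits inside $(G+uv)-w=G-w$ for some $w\in\{u,v\}$, a subgraph of $G$, contradicting that $G$ is $P_4$-free. If instead $u,v$ are true twins, the symmetric argument applied to $G-uv$ (in which $u,v$ become false twins) yields the same contradiction. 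I do not foresee a real obstacle here; the one point to get right is the book-keeping of open versus closed neighborhoods when flipping the adjacency of $u$ and $v$ to swap the twin type.
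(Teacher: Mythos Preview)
The paper does not supply its own proof of Theorem~\ref{thm:p4}; the result is simply cited from Martin and Smith and used as background, so there is no in-paper argument to compare your proposal against.

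On its own merits your argument is correct. The cograph characterization of $P_4$-free graphs guarantees a twin pair $\{u,v\}$ in any such $G$ with $|V(G)|\geq 2$; flipping the $uv$ adjacency swaps the twin type but keeps $u,v$ twins, and since $P_4$ has no twin pair, the induced $P_4$ forced by saturation must omit one of $u,v$ and therefore already sit inside $G$, a contradiction. The bookkeeping you flag (that twinship is inherited by induced subgraphs, and that $(G\pm uv)-w=G-w$ for $w\in\{u,v\}$) goes through exactly as you describe. The only residual caveat is the degenerate case $|V(G)|\leq 1$, where both saturation conditions are vacuous; you note this yourself, and the paper's surrounding remarks (for instance that ``$P_1$-induced-saturated graphs do not exist'') indicate such trivial cases are implicitly excluded.
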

On the other hand, for various choices of $H$, the existence of $H\is$ graphs were proved in subsequent results by Behrens et al. \cite{behrens} and by Axenovich and Csik\'{o}s \cite{axenovich}. The current state of the art, however, is far from a characterization of all graphs $H$ for which $H\is$ graphs exist. 

Motivated by Theorem~\ref{thm:p4}, Axenovich and Csik\'{o}s \cite{axenovich} highlighted the case where $H$ is a path as one of particular interest. This is immediate for small paths: $P_1\is$ graphs do not exist, edgeless graphs on two or more vertices are $P_2\is$, complete graphs on three or more vertices are $P_3\is$, and by Theorem~\ref{thm:p4}, there is no $P_4\is$ graph. Moreover, we observed that the icosahedron is $\ol{P_5}\is$ (see Figure~\ref{fig:smallt}).

\begin{observation}\label{obs:icosa}
    The complement of the icosahedron is $P_5\is$.
\end{observation}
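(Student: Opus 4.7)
The plan is to work in the complement $I$ of $\ol{I}$, i.e., the icosahedron itself, in which the claim becomes: (i) $I$ is $\ol{P_5}$-free; (ii) $I+e$ contains an induced $\ol{P_5}$ for every non-edge $e$ of $I$; and (iii) $I-e$ contains an induced $\ol{P_5}$ for every edge $e$ of $I$. The graph $I$ is distance-transitive with intersection array $\{5,2,1;1,2,5\}$, so its automorphism group (which coincides with that of $\ol{I}$) has a single orbit on edges and one orbit on each type of non-edge (pairs at distance two, and antipodal pairs). It is also transitive on the $20$ triangular faces of $I$, with the face-stabilizer inducing the full symmetric group $S_3$ on its three vertices. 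Hence each of (i), (ii), (iii) reduces to at most two explicit verifications. For these I would use the pentagonal-antiprism model: take the vertex set $\{N, S\} \cup \{t_i, b_i : 0 \le i \le 4\}$ with $N \sim t_i$, $S \sim b_i$, $t_i \sim t_{i+1}$, $b_i \sim b_{i+1}$, $t_i \sim b_i$, and $t_i \sim b_{i+1}$ for all $i$ modulo $5$. In this model the neighborhood $N_I(v)$ of each vertex is an explicit $5$-cycle.

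The hard step is (i), because it requires excluding every candidate configuration rather than exhibiting one. For this I would use the following uniqueness observation. The graph $\ol{P_5}$ contains a unique triangle $\{v_1, v_3, v_5\}$, with $v_3$ the degree-$2$ vertex; the remaining vertex $v_4$ lies in $N_I(v_1)$ and is non-adjacent to both $v_3$ and $v_5$, while $v_2$ is analogous with respect to $v_5$. Since $I$ is distance-regular with $\lambda = 2$ (any two adjacent vertices share exactly two common neighbors) and $K_4$-free, the set $N_I(v_1) \cap N_I(v_3)$ consists of $v_5$ and exactly one other vertex $y_1$, and $N_I(v_1) \cap N_I(v_5)$ consists of $v_3$ and exactly one other vertex $y_2$ with $y_1 \neq y_2$ (else $\{v_1, v_3, v_5, y_1\}$ would be a $K_4$). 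Thus $v_4$ is forced to be the unique remaining vertex of the $5$-cycle $N_I(v_1)$, namely the one opposite to the edge $v_3 v_5$ in that cycle; similarly $v_2$ is forced in $N_I(v_5)$. By face-transitivity and the $S_3$-action on the face, it suffices to carry out the construction once: taking $v_1 = t_0$, $v_3 = N$, $v_5 = t_1$ yields $v_4 = b_0$ and $v_2 = b_2$, and the adjacency list shows $b_0 b_2 \notin E(I)$. This rules out every induced $\ol{P_5}$.

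For (ii) and (iii), it remains to exhibit one witness in each of three graphs. In $I + NS$ (antipodal non-edge) the set $\{N, b_2, t_0, S, t_1\}$ induces $\ol{P_5}$, with vertices listed in the order $v_1, v_2, v_3, v_4, v_5$; in $I + t_0 t_2$ (distance-two non-edge) the set $\{t_0, b_2, N, b_1, t_2\}$ does so; and in $I - N t_0$ (edge) the set $\{N, t_0, t_2, t_4, t_1\}$ does so. Each is a routine check against the antiprism adjacencies. The main obstacle is therefore step (i): the combination of distance-regularity and $K_4$-freeness is what reduces a potentially large search to a single uniqueness argument.
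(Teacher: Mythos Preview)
Your argument is correct. The complementation trick, the use of distance-transitivity to reduce to three orbit representatives, and the uniqueness argument in step~(i) based on $\lambda=2$ and $K_4$-freeness are all sound; I checked each of your explicit five-tuples against the antiprism model and they induce exactly the required graphs. One small point worth stating explicitly in (i): every triangle of the icosahedron is a face (there are $30\cdot 2/3=20$ triangles and $20$ faces), so ``face-transitive with $S_3$ stabilizer'' really does give transitivity on ordered triangles, which is what you use.

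As for comparison with the paper: there is nothing to compare. The paper records this as an observation and supplies no proof at all, remarking only that the same graph appears to have been found independently by computer search. Your write-up is therefore strictly more than what the paper offers; it is a clean human-checkable verification that avoids any case enumeration beyond the three orbit representatives.
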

(We believe that, through a computer search for $P_5\is$ graphs, Bonamy et al. \cite{bonamy} have also come across the same graph and apparently did not recognize it as the complement of the icosahedron.)
\begin{figure}[t!]
    \centering
    \includegraphics[scale=0.6]{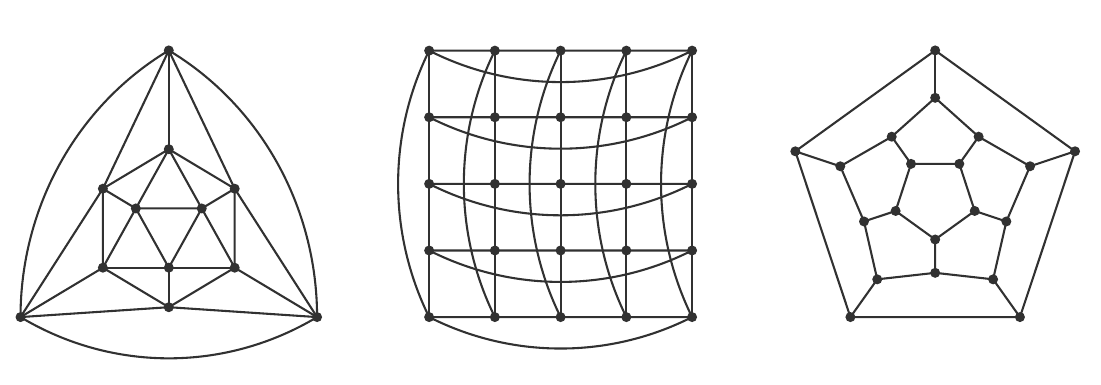}
    \caption{The icosahedron (left), the Cartesian product of two $5$-cycles  (middle) and the dodecahedron (right).}
    \label{fig:smallt}
\end{figure}
\medskip

Progress on paths with six or more vertices was made much more gradually. R\"{a}ty \cite{raty} observed that the (triangle-free) Clebsch graph is $P_6\is$. Cho et al. \cite{cho} constructed $P_{3s}\is$ graphs for all integers $s\geq 2$. Eventually, Dvo\v{r}\'{a}k \cite{ptdvorak} came up with a beautiful construction of $P_t\is$ graphs for all $t\geq 6$, which we describe below.

For every $t\in \poi$, let $VD_t$ be the graph with $2t$ vertices $\{u_i,v_i:\in \poi_t\}$ such that for all distinct $i,j\in \poi_t$, we have:
\begin{itemize}
    \item $u_i,u_j$ are adjacent if and only if $|i-j|\in \{1,t-1\}$;
    \item $v_i,v_j$ are adjacent if and only if $|i-j|\notin \{1,t-1\}$; and
    \item $u_i,v_j$ are adjacent if and only if $i=j$.
\end{itemize}
For instance, $VD_5$ is isomorphic to the Peterson graph.
The following was proved in \cite{ptdvorak}, completing the picture in the case of paths: for $t\in \poi$, there is a $P_t\is$ graph if and only if $t\notin \{1,4\}$. 

\begin{theorem}[Dvo\v{r}\'{a}k \cite{ptdvorak}]\label{thm:ptdvorak}
    For every integer $t\geq 6$, the graph $VD_{t-1}$ is $P_t\is$.
\end{theorem}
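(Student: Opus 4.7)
My plan is to verify the three defining conditions of $P_t$-induced-saturation for $G:=VD_{t-1}$: (a) $G$ is $P_t$-free; (b) $G-e$ contains an induced $P_t$ for every edge $e\in E(G)$; and (c) $G+f$ contains an induced $P_t$ for every non-edge $f\in E(\ol{G})$. Recall that in $VD_{t-1}$ the $u_i$'s induce a cycle $C_{t-1}$, the $v_i$'s induce its complement $\ol{C_{t-1}}$, and the matching edges are $u_iv_i$ for $i\in\poi_{t-1}$.

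For (a), let $P$ be an induced path in $G$, and set $I:=\{i\in\poi_{t-1}:u_i\in V(P)\}$ and $J:=\{i\in\poi_{t-1}:v_i\in V(P)\}$. Since $\{u_i:i\in I\}$ and $\{v_i:i\in J\}$ are induced subgraphs of $P$, each is a disjoint union of paths in its host graph ($C_{t-1}$ and $\ol{C_{t-1}}$, respectively). A direct degree count in $\ol{C_{t-1}}$ yields $|J|\leq 4$, with equality forcing $J$ to consist of four cyclically consecutive indices and $\ol{C_{t-1}}[J]$ to be the specific induced $P_4$ of the form $v_{a+2},v_a,v_{a+3},v_{a+1}$. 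Proceeding by cases on $|J|$ and the isomorphism type of $\ol{C_{t-1}}[J]$, I combine two constraints: each matching edge $u_jv_j$ traversed by $P$ forces $j\in I\cap J$, while a chord arises whenever $u_j$ and $v_j$ both lie in $P$ but are not $P$-adjacent; and for any $v$-run of $P$ of length three or four, the two boundary $v$-indices are cycle-consecutive, so if such a run is internal to $P$ then its two matching $u$-partners are cycle-adjacent in $C_{t-1}$ and produce a chord. Together with the fact that the $u$-parts of $P$ form a disjoint union of arcs of $C_{t-1}$, this yields $|I|+|J|\leq t-1$ in every case.

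For (b), I use the dihedral automorphism group of $G$ (generated by $u_i\mapsto u_{i+1},\,v_i\mapsto v_{i+1}$ and $u_i\mapsto u_{-i},\,v_i\mapsto v_{-i}$) to reduce to one representative per edge orbit: the outer edge $u_1u_2$, the matching edge $u_1v_1$, and, for each $d\in\{2,\dots,\lfloor(t-1)/2\rfloor\}$, the inner edge $v_1v_{1+d}$. For the outer representative, the sequence $u_2,u_3,\dots,u_{t-1},u_1,v_1$ is an induced $P_t$ in $G-u_1u_2$, since $C_{t-1}-u_1u_2$ is a $P_{t-1}$ and $v_1$ is adjacent in $G$ only to $u_1$ among the outer vertices. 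For the remaining representatives, I build an induced $P_t$ by concatenating a long outer arc, a single matching crossing, and a short induced sub-path in $\ol{C_{t-1}}$, exploiting the explicit structure of induced $P_4$'s in $\ol{C_{t-1}}$ from Part (a). Part (c) is handled analogously, with representatives chosen from each non-edge orbit (outer chords, cross non-edges, and the single inner non-edge orbit represented by $v_1v_2$), and an explicit induced $P_t$ in $G+f$ making essential use of the new edge $f$.

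I expect Part (a) to be the main obstacle: Parts (b) and (c) are explicit construction problems, whereas (a) demands a uniform structural bound with no direct construction target. The key leverage is the strong bound $|J|\leq 4$ combined with chord-avoidance at matching edges; translating this into $|V(P)|\leq t-1$ requires a careful case split on $|J|$, on the isomorphism type of $\ol{C_{t-1}}[J]$, and on whether the $v$-runs sit at an end or in the interior of $P$.
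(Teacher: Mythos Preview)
This theorem is not proved in the paper; it is quoted from Dvo\v{r}\'{a}k \cite{ptdvorak} purely as background for the discussion of paths, and the paper supplies no argument for it. There is therefore nothing in the paper to compare your proposal against.

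Regarding the proposal on its own terms: the three-part structure is correct, and for part (a) the key observation that an induced path in $\overline{C_{t-1}}$ has at most four vertices (whence, after a short edge count, $|J|\le 4$) is the right starting point. But what you have written is a plan rather than a proof. In (a) the promised case analysis on $|J|$ and on the positions and shapes of the $v$-runs is asserted, not executed; in (b) you exhibit an explicit induced $P_t$ only for the orbit of $u_1u_2$ and declare the matching-edge and inner-edge orbits handled ``analogously''; and (c) is dismissed in a single sentence. The inner-edge orbits and the non-edge orbits are parametrised by $d\in\{2,\dots,\lfloor (t-1)/2\rfloor\}$, and the required induced $P_t$ does not arise from a single uniform template across all $d$, so these cases must actually be written out before the argument can be checked.
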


Naturally, cycles are the next graphs to be examined. For every integer $t\geq 3$, we denote by $C_t$ the $t$-vertex cycle, also called the \textit{$t$-cycle}. Note that there is no $C_3\is$ graph. Behrens et al. \cite{behrens} gave a simple solution for all odd cycles on five or more vertices:

\begin{theorem}[Behrens, Erbes, Santana, Yager, and Yeager \cite{behrens}]\label{thm:odd}
    For every integer $t\geq 3$, the line graph of the complete bipartite graph $K_{t,t}$ is $C_{2t-1}\is$.
\end{theorem}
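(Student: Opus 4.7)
My plan is to identify $V(L(K_{t,t}))$ with $[t] \times [t]$ so that two distinct cells $(r, c)$ and $(r', c')$ are adjacent if and only if $r = r'$ or $c = c'$. Row permutations, column permutations, and the transposition $(r, c) \mapsto (c, r)$ are automorphisms of $L(K_{t,t})$, and together they act transitively both on edges and on non-edges; this lets me pick a canonical representative when verifying each of the three induced-saturation conditions.

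For the $C_{2t-1}$-freeness, I label each edge of an arbitrary induced cycle $v_1 v_2 \ldots v_k v_1$ with $k \geq 4$ by $R$ or $C$ according to whether $v_i$ and $v_{i+1}$ share a row or share a column. If two consecutive labels agreed, three consecutive cycle vertices would share a common row or column, creating a chord and contradicting inducedness. So the labels must alternate, forcing $k$ to be even; since $2t - 1 \geq 5$ is odd, $L(K_{t,t})$ has no induced $C_{2t-1}$.

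For edge removal, by edge-transitivity I take $e = \{(1, 1), (1, t-1)\}$ and consider the $2t - 1$ cells $v_1 = (1, t)$, $v_2 = (1, 1)$, $v_{2i+1} = (i+1, i)$ and $v_{2i+2} = (i+1, i+1)$ for $1 \leq i \leq t - 2$, and $v_{2t-1} = (1, t-1)$. Consecutive cells share a row or column in a staircase pattern; every row other than row $1$ and every column contains at most two consecutive cells of the sequence, while row $1$ contains the three consecutive cells $v_{2t-1}, v_1, v_2$. Therefore the only non-cycle adjacency in $L(K_{t,t})$ among $\{v_1, \ldots, v_{2t-1}\}$ is the chord between $v_2$ and $v_{2t-1}$, which equals $e$, so removing $e$ produces an induced $C_{2t-1}$. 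For edge addition, by transitivity on non-edges I take $e = \{(1, 1), (2, 2)\}$ and consider the analogous shortcut staircase $u_1 = (1, 1)$, $u_2 = (1, 3)$, $u_{2i-3} = (i, i)$ and $u_{2i-2} = (i, i+1)$ for $3 \leq i \leq t-1$, $u_{2t-3} = (t, t)$, $u_{2t-2} = (t, 2)$, $u_{2t-1} = (2, 2)$. Once more each row and each column contains at most two (consecutive) cells of this sequence, so these vertices induce a $P_{2t-1}$ from $(1, 1)$ to $(2, 2)$ in $L(K_{t,t})$, and adding $e$ closes it into an induced $C_{2t-1}$.

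The main challenge is designing these sequences so that the intended chord (in the edge-removal case) or missing edge (in the edge-addition case) is the only non-cycle row- or column-coincidence inside $L(K_{t,t})$. In the first case this forces the intentional ``overloading'' of row $1$ with three consecutive sequence-cells to plant the chord $e$, while in the second a detour through columns $3, 4, \ldots, t$ is needed to reach the non-adjacent endpoint $(2, 2)$; once the sequences are specified, the remaining verification is a routine row-and-column case check.
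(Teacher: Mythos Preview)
The paper does not prove this theorem; it is quoted from Behrens et al.\ as background, so there is no in-paper argument to compare against. Your proof is correct: the identification of $L(K_{t,t})$ with the $t\times t$ grid and the alternating $R/C$-label observation cleanly show that every induced cycle of length at least four is even (hence no induced $C_{2t-1}$ since $2t-1\geq 5$), and your explicit staircase constructions for the edge-removal and edge-addition cases check out --- in each, every row and every column meets the chosen vertex set in at most two consecutive cells, except for the one intentional triple in row~$1$ in the removal case that plants the unique chord $e$. The appeal to edge- and non-edge-transitivity of $L(K_{t,t})$ (via row permutations, column permutations, and the transpose) to reduce each case to a single representative is exactly right.
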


In contrast, finding an $H\is$ graph for an even cycle $H$ is surprisingly difficult. We were able to find slick examples when $H$ is the $4$-cycle (also pointed out in \cite{behrens}), the $6$-cycle and the $8$-cycle (see Figure~\ref{fig:smallt}).

\begin{observation}\label{obs:c6}
    The icosahedron is $C_4\is$, the Cartesian product of two $5$-cycles is $C_6\is$, and the dodecahedron is $C_8\is$.
\end{observation}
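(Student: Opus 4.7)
The plan is to exploit the high symmetry of each of the three graphs---icosahedron, $C_5\square C_5$, and dodecahedron---all of which are vertex- and edge-transitive, and whose non-edges split into a small number of orbits under the automorphism group (indexed essentially by pairwise distance, or by coordinate differences in the toroidal case). For each graph $G$ and target cycle $H\in\{C_4,C_6,C_8\}$, the verification reduces to three tasks: show $G$ is $H$-free; by edge-transitivity, fix one edge $e\in E(G)$ and exhibit an induced $H$ in $G-e$; and for each orbit of non-edges, fix one representative $f\in E(\ol{G})$ and exhibit an induced $H$ in $G+f$.

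For $H$-freeness: in the icosahedron, any two non-adjacent vertices either are antipodal (and have no common neighbor) or are at distance $2$ (and have exactly two common neighbors, which are themselves adjacent because the neighborhood of each vertex induces a $C_5$), so no four vertices induce $C_4$. For $C_5\square C_5$, a putative induced $6$-cycle has $h$ ``horizontal'' and $v$ ``vertical'' edges with $h+v=6$, and the mod-$5$ closure condition forces $(h,v)\in\{(2,4),(4,2)\}$ (the cases $(0,6), (6,0)$ fit into a single $C_5$-factor, which has only five vertices); a short case analysis on the cyclic placement of the two minority edges then exhibits a chord in each subcase. For the dodecahedron, $C_8$-freeness follows from a careful analysis of how an $8$-cycle interacts with the pentagonal face structure under the girth-$5$ and $3$-regularity constraints, always producing a chord.

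For the edge-removal and edge-addition conditions, one works orbit by orbit. In the icosahedron, the two non-edge orbits are the distance-$2$ and antipodal pairs; the antipodal case is the most striking, since although the two endpoints have no common neighbor, each neighbor of one endpoint is adjacent to exactly two neighbors of the other, so an induced $C_4$ through the added edge appears. For $C_5\square C_5$, the four non-edge orbits correspond to coordinate differences $\{0,2\}$, $\{1,1\}$, $\{1,2\}$, and $\{2,2\}$, and for each a concrete induced $C_6$ through the added edge can be built using the toroidal structure. For the dodecahedron, the four non-edge orbits correspond to distances $2, 3, 4, 5$, and an induced $C_8$ through the added edge is constructed explicitly in each case, with the antipodal pair requiring the most intricate routing.

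The main obstacle is expected to be the dodecahedron case, both for $C_8$-freeness and for the edge-addition step at the antipodal pair, where the diameter-$5$ structure forces the induced $8$-cycle to traverse many vertices before closing up. Keeping the case analysis manageable throughout relies on selecting representatives that respect the antipodal involution and the pentagonal face structure, so that routine chord-checks---rather than brute-force enumeration---suffice.
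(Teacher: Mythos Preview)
The paper states Observation~\ref{obs:c6} without proof, so there is no argument to compare against; your proposal supplies what the paper leaves implicit.

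Your reduction via transitivity is correct: all three graphs are edge-transitive, and the non-edge orbits are exactly as you list (two for the icosahedron and four for the dodecahedron, by distance-transitivity; four for $C_5\square C_5$, indexed by the multiset of coordinate differences). Your $H$-freeness arguments are sound. For the icosahedron the common-neighbour analysis is exactly right. For $C_5\square C_5$ the modular count correctly forces $(h,v)\in\{(2,4),(4,2)\}$, and one checks that the two minority-direction edges must sit at cyclic distance~$3$ (distances~$1$ and~$2$ fail to give a simple cycle), yielding a $2\times 3$ ladder whose middle rung is the required chord. For the dodecahedron, a clean way to execute your intended face analysis is via Euler's formula on one side of a putative $8$-cycle $C$: with $v_i$ interior vertices, $e_i$ interior edges and $f$ interior pentagons one gets $5f=2e_i+8$ and $v_i-e_i+f=1$, forcing $v_i\in\{0,3,6,9,12\}$; if $C$ were induced then $v_i=3$ would yield a triangle among the three interior vertices and $v_i=6$ a $6$-vertex, $7$-edge, girth-$\geq 5$ subgraph---both impossible---while $v_i=0$ forces a chord outright, and since the two sides satisfy $v_i+v_i'=12$, one side always lands in $\{0,3,6\}$. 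The edge-removal and edge-addition witnesses you outline can all be written down explicitly along the lines you indicate, and your identification of the antipodal dodecahedron pair as the most delicate case is accurate.
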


\begin{figure}
    \centering
    \includegraphics[scale=0.55]{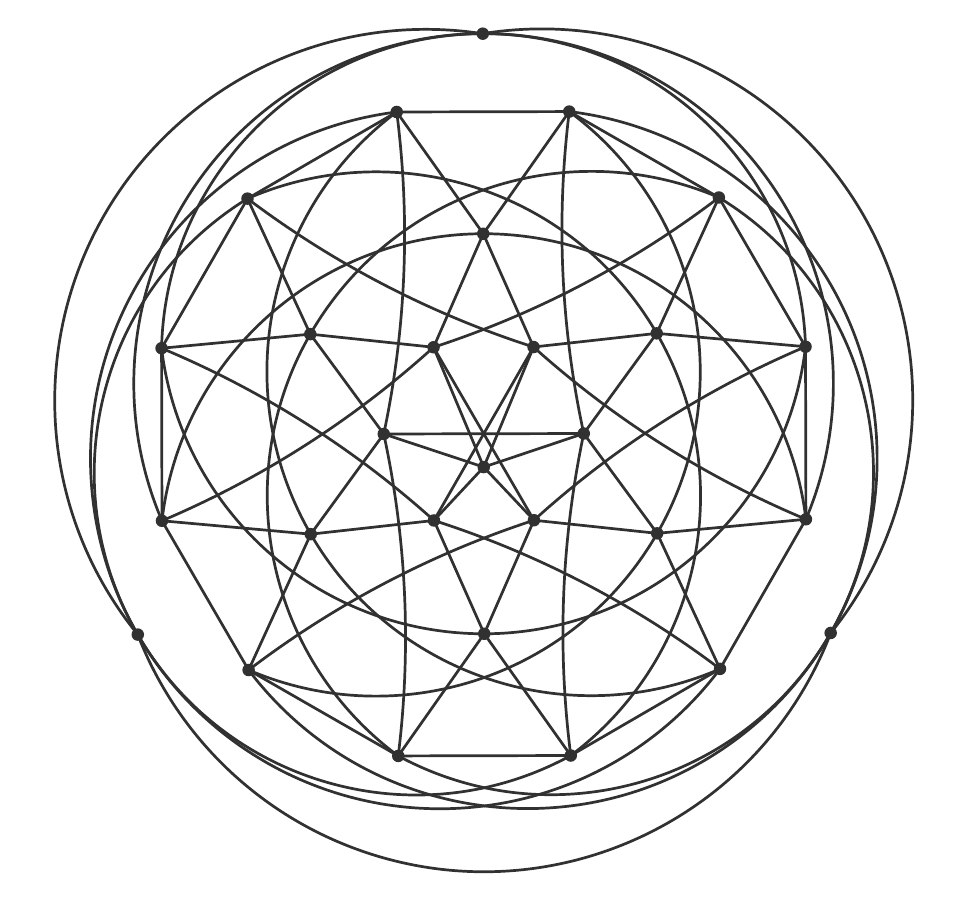}
    \caption{A $C_{10}\is$ graph.}
    \label{fig:c10is}
\end{figure}

Using a computer, we also verified that the 28-vertex graph depicted in Figure~\ref{fig:c10is} is $C_{10}\is$\footnote{We do not know if the graph in Figure~\ref{fig:c10is} is (some variant of) a well-known graph. But it exhibits rather interesting symmetries. The most curious, to us, is that it is isomorphic to the intersection graph of its own triangles.}. But that is the end of the list: it remains wide open whether induced saturated graphs exist for even cycles on $12$ or more vertices. Here is a weaker question that may be more approachable: Given an even cycle $H$ (on $12$ or more vertices), does there exist a graph $G$ that is $H$-free with at least one edge, and yet removing any edge from $G$ creates an induced copy of $H$? Our main result, Theorem~\ref{thm:maineven} below, answers this question in the affirmative (note that we only need \ref{thm:maineven} for $t\geq 7$; however, the proof also works for $t\in \{5,6\}$, so we keep that).

\begin{theorem}\label{thm:maineven}
    For every integer $t\geq 5$, there is a graph $G_t$ with $E(G_t)\neq \varnothing$ such that $G_t$ is $C_{2t-2}$-free but $G_t-e$ has an induced subgraph isomorphic to $C_{2t-2}$ for every $e\in E(G_t)$.
\end{theorem}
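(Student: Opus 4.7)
I first reformulate the required property. Since $G_t$ must be $C_{2t-2}$-free, any induced $C_{2t-2}$ in $G_t - e$ must contain both endpoints $u,v$ of $e$ at positions non-adjacent on that cycle. Equivalently, $e$ must appear as the sole chord of an induced subgraph of $G_t$ isomorphic to $C_{2t-2}$ plus one chord; removing that chord restores the induced $C_{2t-2}$. So the theorem asks for a $C_{2t-2}$-free graph in which every edge is the chord of some induced ``$C_{2t-2}$ plus one chord'' subgraph, together with the condition that no $2t-2$ vertices induce a chord-free Hamilton cycle in the subgraph they span.

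\textbf{Construction.} My plan is an explicit, highly symmetric construction. A natural first candidate is a vertex-transitive graph---a circulant $C_n(S)$ or a Cayley graph---on $O(t)$ vertices, so that vertex-transitivity reduces the edge-removal condition to $|S|$ orbit representatives. A concrete starting point is $C_n(\{1, k\})$ with $n$ linear in $t$ and a small $k$ chosen to guarantee a short-cycle chord in every would-be induced $(2t-2)$-cycle. If such a symmetric template proves too rigid, a fallback is a \emph{gluing} construction: take several copies of $C_{2t-2}$, each carrying a designated chord, and identify them along chord endpoints and chosen cycle arcs so that every edge is eventually the chord of some copy, while also not creating short-cutting edges that would produce new chord-free $(2t-2)$-cycles.

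\textbf{Steps and Main Obstacle.} First, define $G_t$. Second, prove $C_{2t-2}$-freeness: in the circulant setting, parameterize any would-be induced cycle by its step sequence $s_1,\dots,s_{2t-2}\in\pm S$ with $\sum s_i\equiv 0\pmod{n}$, and use the no-chord condition to eliminate length $2t-2$; in the glued construction, classify potential cycles by their intersection pattern with each copy and rule out chord-free ones. Third, for one representative per edge orbit, write down an explicit induced path of length $2t-3$ connecting its endpoints in $G_t-e$ whose vertex set induces only the path edges. The main obstacle will be the second step: ruling out accidental induced $C_{2t-2}$'s at the ``medium density'' forced by the chord-covering requirement, precisely the regime where such cycles tend to appear. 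Handling this uniformly in $t$ is the heart of the proof and will likely require careful case analysis based on index residues (in the circulant case) or on the topology of cycle fragments from different copies (in the glued case).
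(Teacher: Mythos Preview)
Your proposal is a research outline, not a proof: you never fix a construction, and you explicitly flag the $C_{2t-2}$-freeness step as unsolved. As written there is nothing to verify. Let me point to the two places where the plan, as stated, would break.

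First, the circulant hope $C_n(\{1,k\})$ with $n$ linear in $t$ is not merely unspecified---it would, if it worked, resolve an open problem. The paper's own construction has $|V(G_t)|$ doubly exponential in $t$, and Question~\ref{q:poly} asks whether even a polynomial bound is possible. So ``$O(t)$ vertices'' is not a reasonable default assumption; you should expect any small symmetric graph to carry accidental induced $(2t-2)$-cycles, and indeed $4$-regular circulants are rich in induced cycles of many lengths. Your step-sequence parameterization gives a necessary condition $\sum s_i\equiv 0\pmod n$, but the hard part is the \emph{no-chord} constraint, which involves all $\binom{2t-2}{2}$ pairs simultaneously; there is no reason to expect a clean residue argument to kill exactly the length $2t-2$.

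Second, the fallback ``glue copies of $C_{2t-2}$ with one chord, identifying along arcs so every edge eventually becomes a chord'' is in the right spirit but omits the mechanism that makes gluing controllable. The paper's construction is a gluing too, but of much more rigid pieces: recursively built planar gadgets (``canonical territories'') whose boundaries are long induced cycles, glued along the $2$-factors of a cubic graph $\Gamma$ of girth $\gg t$. Two features do the real work and have no analogue in your sketch: (i) a metric lemma (Theorem~\ref{thm:expansiondistance}) saying that boundary vertices close in a territory are close on the boundary, which lets one project any hypothetical induced $(2t-2)$-cycle onto short paths in $\Gamma$; and (ii) the large girth of $\Gamma$, which forces those projections to cancel and yields a contradiction via a counting argument. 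Without a device of this kind, ``classify potential cycles by their intersection pattern with each copy'' is exactly the step that explodes, because copies glued along arcs create many new cycles that are not contained in any single copy.

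In short: pick an explicit $G_t$ and prove the two properties. If you pursue gluing, you will need (a) gadgets with a controllable distance-on-boundary property and (b) a base graph whose global structure (e.g.\ high girth) prevents short cycles from being assembled across gadgets.
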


Two remarks: First, the proof of Theorem~\ref{thm:maineven} is already much longer and more technical than the proof of any of the results mentioned in this introduction. We hope, and it would be of great interest if, our methods can be extended to solve the induced saturation problem for all (or at least some long) even cycles. To that end, a good start would be to answer the following:

\begin{question}\label{q:add}
  Let $t\geq 6$ be an integer. Does there exist a graph $G$ with $E(\ol{G})\neq \varnothing$ such that $G$ is $H$-free but $G+e$ has an induced subgraph isomorphic to $C_{2t-2}$ for every $e\in E(\ol{G})$?
\end{question}

Second, the number of vertices in our construction of the graph $G_t$ from Theorem~\ref{thm:maineven} is doubly exponential in $t$. Could this be improved? We do not even know a rationale for expecting the number of vertices of $G_t$ to be more than linear in $t$. Let us at least ask:

\begin{question}\label{q:poly}
  Does there exist a polynomial $f$ such that for every $t\geq 3$, there is a graph $G$ on at most $f(t)$ vertices with $E(G)\neq \varnothing$, such that $G$ is $H$-free but $G-e$ has an induced subgraph isomorphic to $C_{2t-2}$ for every $e\in E(G)$?
\end{question}

This paper is organized as follows. First, to avoid repetition, we will assume for the entire remainder of the paper that $t\geq 5$ is a fixed positive integer and will prove Theorem~\ref{thm:maineven} for this fixed choice of $t$. In Section~\ref{sec:prelim}, we will set up some notation and terminology, and in particular will define ``territories'' and their ``expansions'' as the two notions that are central to the proof. A ``territory'' is simply a graph along with an induced cycle in the graph marked as the ``boundary.'' An ``expansion'' of a territory is another territory obtained by enlarging the boundary carefully so that its length increases by a (controlled) linear function of $t$.

In Section~\ref{sec:canonical}, we introduce a special kind of territories called ``canonical'' which, roughly, are built by successively gluing $t$-cycles along the boundary, and then taking expansions. We will then show that there are canonical territories with the length of the boundary equal to any sufficiently large even number, and establish several properties for canonical territories that will be used in the rest of the proof.

In Section~\ref{sec:mainconst}, we construct the graph $G_t$ that is planned to satisfy Theorem~\ref{thm:maineven}. Essentially, $G_t$ is obtained from piecing together canonical territories with long enough boundaries in such a way that they form a tiling of a (hypothetical) surface of (very) large genus; see Figure~\ref{fig:torushex}. In Section~\ref{sec:c2tfree}, we prove that $G_t$ is $C_{2t-2}$-free, and in Section~\ref{sec:critical}, we will prove that removing any edge from $G_t$ creates an induced $(2t-2)$-cycle, hence completing the proof of Theorem~\ref{thm:maineven}.
\begin{figure}[t!]
    \centering
    \includegraphics[scale=0.6]{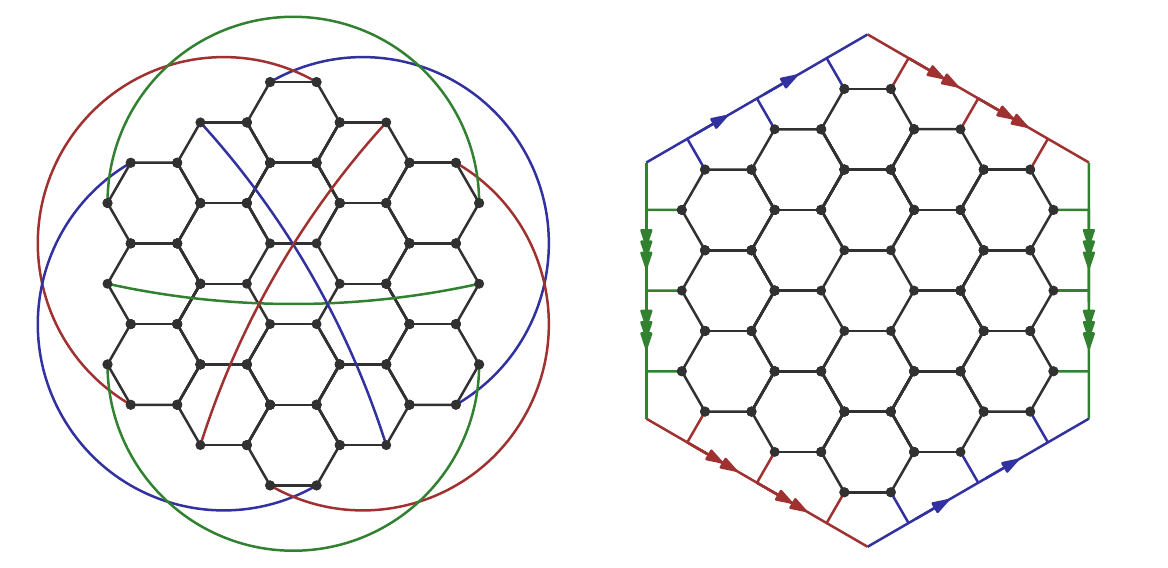}
    \caption{A $C_{10}$-free graph $G$ in which removing each edge creates an induced $C_{10}$ (left) and a drawing of $G$ on the torus that constitutes a hexagonal tiling of the torus. This graph is \underline{not} $C_{10}\is$.}
    \label{fig:torushex}
\end{figure}

\section{Territories, expansions, and other preliminaries}\label{sec:prelim}
For two graphs $G$ and $G'$, we denote by $G\cup G'$ the graph with vertex set $V(G)\cup V(G')$ and edge set $E(G)\cup E(G')$. The union of any finite number of graphs is defined analogously. Let $G=(V(G),E(G))$ be a graph. For $X\subseteq V(G)$, we denote by $G[X]$ the induced subgraph of $G$ with vertex set $X$ and by $G\setminus X$ the induced subgraph of $G$ with vertex set $V(G)\setminus X$. Two subsets $X, Y$ of $V(G)$ are \textit{anticomplete in $G$} if $X\cap Y=\varnothing$ and there is no edge of $G$ with an end in $X$ and an end in $Y$. For $x\in V(G)$ and $Y\subseteq V(G)$, we say that \textit{$x$ is anticomplete to $Y$ in $G$} to mean $\{x\}$ and $Y$ are anticomplete in $G$.

Let $P$ be a path. We write $P=p_1\dd \cdots \dd p_k$, for $k\in \poi$, to mean $V(P)=\{p_1,\ldots, p_k\}$ and $E(P)=\{p_ip_{i+1}:i\in \poi_{k-1}\}$. The vertices $p_1,p_k$ are the \textit{ends} of $P$, and $V(P)\setminus \{p_1,p_k\}$ is the \textit{interior} of $P$. The \textit{length} of $P$ is the number of edges in $P$. For $x,y\in V(P)$, we write $x\dd P\dd y$ to denote the subpath of $P$ with ends $x,y$. Similarly, for a cycle $C$, we write $P=c_1\dd \cdots \dd c_k\dd c_1$, for integer $k\geq 3$, to mean $V(C)=\{c_1,\ldots, c_k\}$ and $E(C)=\{c_ic_{i+1}:i\in \poi_{k-1}\}\cup \{c_1c_k\}$. The \textit{length} of $C$ is the number of edges in $C$. Given a graph $G$, an (induced) \textit{path in $G$} is an (induced) subgraph of $G$ that is a path, and an (induced) \textit{cycle in $G$} is an (induced) subgraph of $G$ that is a cycle. For integer $k\geq 3$, an (induced) \textit{$k$-cycle in $G$} is (induced) subgraph of $G$ that is a $k$-cycle.

A \textit{territory} is a pair $(T,B)$ where $T$ is a graph and $B$ is an induced cycle in $T$. The \textit{perimeter} of $(T,B)$ is the length of $B$. Given a territory $(T',B')$, by an \textit{expansion of $(T',B')$} we mean a territory $(T,B)$ obtained from $(T',B')$ through the following process (see Figure~\ref{fig:expansion}).  Choose a stable set $\{x_i:i\in \poi_k\}$ of cardinality $k$ in $B'$ for some $k\in \poi\cup \{0\}$ (so the stable set may be empty). For each $i\in \poi_k$, let $x^-_{i}$ and $x^+_{i}$ be the two neighbors of $x_i$ in $B'$; it follows that $\{x^-_{1},x^+_{1},\ldots,x^-_{k},x^+_{k}\}\cap \{x_1\ldots, x_k\}=\varnothing$. Choose a subset $I$ of $\poi_{k}$. Let $T$ be the graph constructed as follows:
\begin{itemize}
    \item First, for each $i\in \poi_k$, add to $T'$ a path $P_i$ of length $2t-6$ with ends $y^-_i,y^+_i$ and middle vertex $y_i$, and add the edges $x^-_iy^-_i$, $x_iy_i$ and $x^+_iy^+_i$.
    \item Then, for each $i\in I$, add to the graph constructed in the previous step a path $Q_i$ of length $t-4$ with ends $z^-_i$ and $z^+_i$, and assuming $v^-_i$ and $v^+_i$ to be the two neighbors of $y_i$ in $P_i$, add the edges $v^-_iz^-_i$ and $v^+_iz^+_i$.
\end{itemize}

This concludes the construction of the graph $T$. In order to define $B$, for each $i\in \poi_k$, let $R_i$ be the induced path in $T$ with ends $x^-_i$ and $x^+_i$, specified as follows:
\begin{itemize}
    \item If $i\in \poi_k\setminus I$, then let $R_i=x^-_i\dd y^-_i\dd P_i\dd y^+_i\dd x^+_i$; and
    \item If $i\in I$, then let $R_i=x^-_i\dd y^-_i\dd P_i\dd v^-_i\dd z^-_i\dd Q_i\dd z^+_i\dd v^+_i\dd P_i\dd y^+_i\dd x^+_i$.
\end{itemize}

Now, let 
$$B=(B\setminus \{x_1,\ldots, x_k\})\cup \left(\bigcup_{i=1}^kR_i\right).$$
Then $B$ is an induced cycle in $T$, and so $(T,B)$ is a territory.
\begin{figure}[t!]
    \centering
    \includegraphics[scale=0.7]{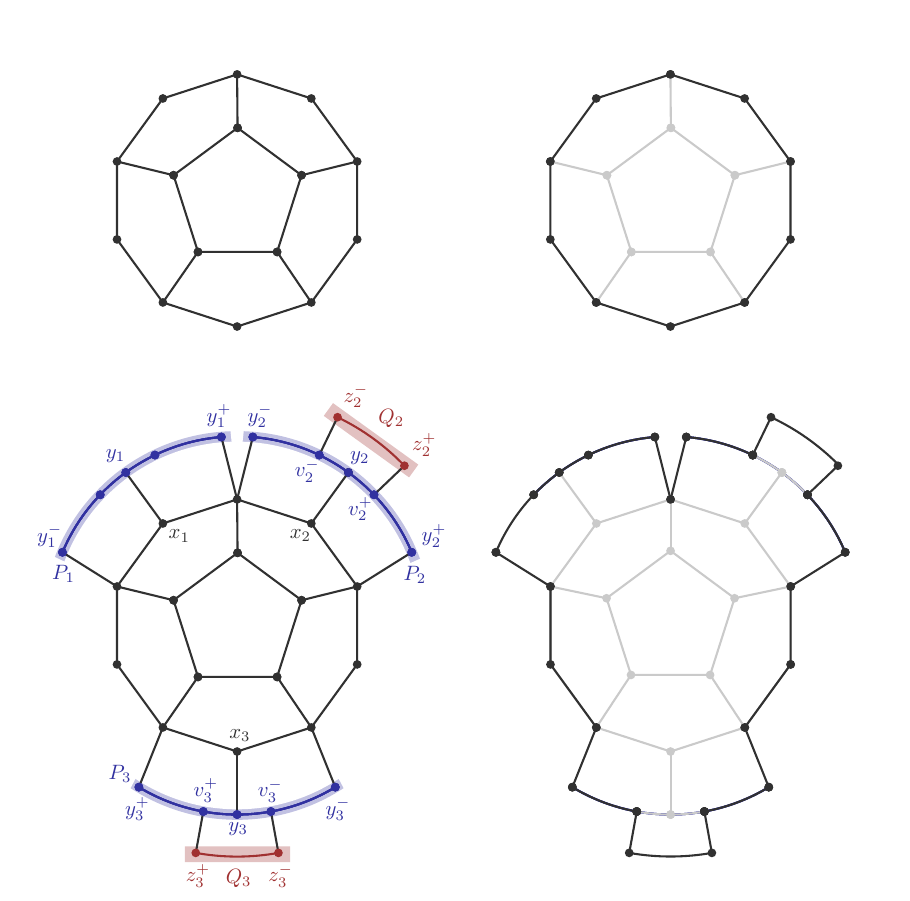}
    \caption{Top: A territory $(T',B')$ of perimeter 10 with the graph $T'$ on the left and the induced cycle $B'$ in $T'$ highlighted on the right. Bottom: An expansion $(T,B)$ of $(T',B')$ (for $t=5$) where $k=3$ and $I=\{2,3\}$, with the graph $T$ on the left and the induced cycle $B'$ in $T'$ highlighted on the right.}
    \label{fig:expansion}
\end{figure}

For instance, if $k=0$, then $T=T'$ and $B=B'$. Moreover, if $(T',B')$ has perimeter $\lambda'\in \poi$,  then  $(T,B)$ has perimeter $\lambda'+(2t-6)(k-|I|)+(3t-10)|I|$, and $2k\leq \lambda'$. In general, it is easy to see that:

\begin{observation}\label{obs:perimeter}
   Let $\lambda,\lambda'\in \poi$ and let $(T',B')$ be a territory of perimeter $\lambda'$. Then $(T',B')$ has an expansion of perimeter $\lambda$ if and only if  there are $s_1,s_2\in \poi\cup \{0\}$ for which we have
  $(2t-6)s_1+(3t-10)s_2=\lambda-\lambda'$
  and 
  $2(s_1+s_2)\leq \lambda'$.
\end{observation}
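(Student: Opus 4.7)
The plan is to prove both implications via a direct perimeter computation, together with the elementary fact that a cycle of length $\lambda'$ contains a stable set of size $k$ if and only if $2k\leq \lambda'$.

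For the forward direction, I would suppose that $(T,B)$ is an expansion of $(T',B')$ built from a stable set $\{x_1,\ldots,x_k\}$ in $B'$ together with a set $I\subseteq \poi_k$, and compute the length of each $R_i$. For $i\in \poi_k\setminus I$, the path $R_i=x_i^-\dd y_i^-\dd P_i\dd y_i^+\dd x_i^+$ has length $(2t-6)+2=2t-4$. For $i\in I$, since $y_i$ is the middle vertex of the path $P_i$ of length $2t-6$, both subpaths of $P_i$ from $y_i^\pm$ to $v_i^\pm$ have length $t-4$; combined with $|E(Q_i)|=t-4$ and the four additional edges $x_i^\pm y_i^\pm$ and $v_i^\pm z_i^\pm$, the path $R_i$ has length $3(t-4)+4=3t-8$. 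In forming $B$ from $B'$, each $x_i$ (contributing two edges to $B'$) is replaced by $R_i$, and the $R_i$ are pairwise internally disjoint because the $x_i$ are stable in $B'$, so the perimeter changes by
\[
(k-|I|)\bigl((2t-4)-2\bigr)+|I|\bigl((3t-8)-2\bigr)=(2t-6)(k-|I|)+(3t-10)|I|.
\]
Setting $s_1=k-|I|$ and $s_2=|I|$ then gives the required identity, and $2(s_1+s_2)=2k\leq \lambda'$ since $\{x_1,\ldots,x_k\}$ is stable in the $\lambda'$-vertex cycle $B'$.

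For the backward direction, given $s_1,s_2\in\poi\cup\{0\}$ satisfying both conditions, I would set $k=s_1+s_2$; the hypothesis $2k\leq \lambda'$ allows me to choose a stable set $\{x_1,\ldots,x_k\}$ of cardinality $k$ in $B'$ (for instance, by taking every other vertex along an arc of $B'$). Any subset $I\subseteq \poi_k$ of cardinality $s_2$ then yields, via the expansion construction, a territory of perimeter $\lambda'+(2t-6)s_1+(3t-10)s_2=\lambda$. This proof is essentially bookkeeping, so I expect no genuine obstacle; the only mild subtlety is the degenerate case $k=0$, in which $\lambda=\lambda'$ and both $s_1,s_2$ must be zero, matching the convention that the trivial expansion returns $(T',B')$ itself.
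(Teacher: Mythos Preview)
Your proposal is correct and matches the paper's approach exactly: the paper states the perimeter formula $\lambda'+(2t-6)(k-|I|)+(3t-10)|I|$ and the bound $2k\leq \lambda'$ immediately before the observation and then declares it ``easy to see,'' without giving a proof; your argument simply unpacks this in the natural way, with $s_1=k-|I|$ and $s_2=|I|$.
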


\section{Canonical territories: existence and basic properties}\label{sec:canonical}
 
 From now on, we will mostly focus on the following example of territories. For every $m\in \poi\cup \{0\}$, let $(T_m,B_m)$ be the territory of perimeter $t(t-3)^m$ defined recursively, as follows. Let $T_0$ be a $t$-cycle and let $B_0=T_0$. Assume that for some $m\in \poi$, the territory $(T_{m-1},B_{m-1})$ of perimeter $t(t-3)^{m-1}$ is defined. Then $B_{m-1}$ is a $t(t-3)^{m-1}$-cycle; say $B_{m-1}=x_1\dd\cdots\dd x_{t(t-3)^{m-1}}\dd x_1$. Now, let $T_m$ be the graph constructed by first adding to $T_{m-1}$ the $(t-4)$-subdivision $B_{m}$ of a $t(t-3)^{m-1}$-cycle $x'_1\dd\cdots \dd x'_{t(t-3)^{m-1}}\dd x'_1$, and then adding the edge $x_ix'_i$ for every $i\in \poi_{t(t-3)^{m-1}}$ (recall that the \textit{$r$-subdivision} of a graph $H$, for $r\in \poi\cup \{0\}$, is the graph obtained from $H$ by replacing each edge of $H$ with a path of length $r+1$). It follows that $B_m$ is an induced $t(t-3)^m$-cycle in $T_m$, and so $(T_m,B_m)$ is a territory of perimeter $t(t-3)^m$ (see Figure~\ref{fig:territory}). In particular, the territories $((T_m, B_m):m\in \poi)$ have arbitrarily large even perimeters. But we need more: for \textit{every} sufficiently large even number $\lambda\in \poi$, we need a territory of perimeter $\lambda$ that maintains the useful properties of $((T_m, B_m):m\in \poi)$, and that leads us to the following definition. We say that a territory $(T,B)$ is \textit{canonical} if for some $m\in \poi\cup \{0\}$, there is an expansion $(T',B')$ of $(T_m, B_m)$ as well as an isomorphism $f:V(T)\rightarrow V(T')$ between $T$ and $T'$ such that $f|_{V(B)}$ is an isomorphism between $B$ and $B'$.
 \begin{figure}
    \centering
    \includegraphics[scale=0.5]{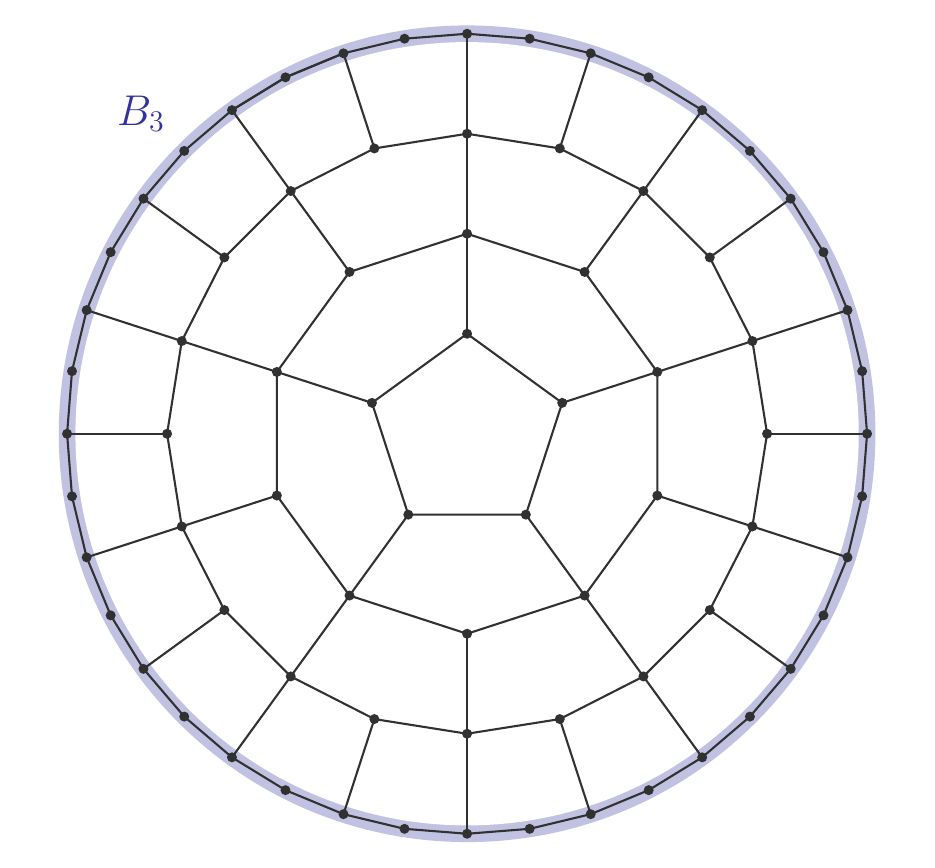}
    \caption{The territory $(T_3,B_3)$ of perimeter $t(t-3)^3$ (for $t=5$).}
    \label{fig:territory}
\end{figure}
 
 Canonical territories are the main building blocks in our construction of the graph $G_t$ that satisfies Theorem~\ref{thm:maineven}. We begin by showing that every sufficiently large even number is the perimeter of some canonical territory:

\begin{theorem}\label{thm:expansionexists}
For every even integer $\lambda\geq t^3$, there is a canonical territory of perimeter $\lambda$.
\end{theorem}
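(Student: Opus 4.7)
The plan is to invoke Observation~\ref{obs:perimeter} with the territory $(T_m, B_m)$, whose perimeter is $t(t-3)^m$: it suffices to find $m \in \poi\cup\{0\}$ and nonnegative integers $s_1, s_2$ with $(2t-6)s_1 + (3t-10)s_2 = \lambda - t(t-3)^m$ and $2(s_1+s_2) \leq t(t-3)^m$, since taking $(T_m,B_m)$ itself (via the trivial expansion with $k=0$) is canonical, so the resulting expansion witnesses a canonical territory of perimeter $\lambda$.

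First I would choose $m$ to be the largest nonnegative integer with $t(t-3)^m \leq \lambda - K$, where $K = K(t)$ is a threshold of order $(2t-6)(3t-10)$ picked large enough to exceed the Frobenius thresholds that will arise below. The assumption $\lambda \geq t^3$ (together with $t \geq 5$) guarantees that such an $m$ exists and in fact forces $m \geq 1$, which will matter for parity reasons. Setting $N := \lambda - t(t-3)^m$, the maximality of $m$ yields $K \leq N < t(t-3)^m(t-4) + K$, and $N$ is even because $\lambda$ is even and $t(t-3)^m$ is even (using $m \geq 1$ when $t$ is odd, so $(t-3)^m$ is even).

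Next I would represent $N$ as $(2t-6)s_1 + (3t-10)s_2$ with $s_1,s_2 \geq 0$. When $t$ is even, $\gcd(2t-6, 3t-10) = 2$ and, after dividing by $2$, the problem becomes one about the numerical semigroup generated by the coprime pair $t-3$ and $(3t-10)/2$. When $t$ is odd, $\gcd(2t-6, 3t-10) = 1$ but the evenness of $N$ forces $s_2$ even; writing $s_2 = 2s'_2$ reduces to the numerical semigroup generated by the coprime pair $t-3$ and $3t-10$, applied to $N/2$. In either case, the Sylvester--Frobenius theorem guarantees representability once $N$ passes a threshold of size $O(t^2)$, which is exactly what the choice of $K$ provides. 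Among all such representations I would select one that maximizes $s_2$, which yields $s_1 + s_2 \leq N/(3t-10) + O(t)$.

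Finally I would verify the capacity inequality $2(s_1+s_2) \leq t(t-3)^m$. Substituting $N < t(t-3)^m(t-4) + K$ and the bound on $s_1+s_2$, the required inequality reduces to checking $2(t-4)/(3t-10) < 1$ (which holds because $2t-8 < 3t-10$ for $t>2$) together with absorbing an $O(t)$ additive slack into $t(t-3)^m \geq t(t-3)$ (which holds since $m \geq 1$). The main obstacle is bookkeeping: threading a single choice of $K$ through the Frobenius thresholds in both parity cases while simultaneously guaranteeing the final capacity inequality, and verifying that the small boundary cases (small $t$, or $\lambda$ close to $t^3$) do not spoil any of the estimates. Everything else is essentially elementary.
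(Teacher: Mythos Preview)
Your plan is correct and follows the same outline as the paper's proof: pick $m$ maximal with $t(t-3)^m \le \lambda - (2t-6)(3t-10)$, set $N = \lambda - t(t-3)^m$, write $N = (2t-6)s_1 + (3t-10)s_2$ with $s_1,s_2 \ge 0$, and feed this into Observation~\ref{obs:perimeter}. The difference is purely in execution. You appeal to Sylvester--Frobenius, splitting into cases on the parity of $t$, and then optimise over representations to control $s_1+s_2$. The paper instead uses the single identity $3(2t-6)-2(3t-10)=2$ to give an explicit solution: writing $N=2d$ and $2d=(2t-6)q+r$ with $0\le r<2t-6$, it takes $s_1=3d-(3t-10)(q+1)$ and $s_2=2t-6-r$. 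This avoids all case analysis and makes the capacity bound a one-liner: since $0<2t-6<3t-10$ and $s_2>0$, one has $(2t-6)(s_1+s_2)<(2t-6)s_1+(3t-10)s_2=N<t(t-3)^{m+1}$, hence $2(s_1+s_2)<t(t-3)^m$.

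One small correction to your sketch: absorbing the $O(t)$ slack into the margin $t(t-3)^m\cdot(t-2)/(3t-10)$ actually requires $m\ge 2$ rather than $m\ge 1$ when $t=5$ (at $m=1$ the margin is only $t(t-3)(t-2)/(3t-10)=6$, which falls short of the slack of roughly $48/5$). Fortunately your hypothesis $\lambda\ge t^3$ together with $K=(2t-6)(3t-10)$ does force $m\ge 2$ for all $t\ge 5$ --- the paper verifies this explicitly via $\lambda-(2t-6)(3t-10)>t(t-3)^2$ --- so this is, as you anticipated, just bookkeeping.
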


\begin{proof}
First, we show that:

\sta{\label{st:boundingthediff} There exists $m\in \poi$ such that $(2t-6)(3t-10)\leq \lambda-t(t-3)^m<t(t-3)^{m+1}$.}

Since $\lambda\geq t^3>t(t+3)(t-3)>(t^2+3t-20)(t-3)$, it follows that
$$\lambda-(2t-6)(3t-10)>(t^2+3t-20-6t+20)(t-3)=t(t-3)^2.$$
Since $(t(t-3)^m:m\in \poi)$ is a strictly increasing sequence, it follows that there is an integer $m\geq 2$ for which 
$$t(t-3)^m\leq \lambda-(2t-6)(3t-10)<t(t-3)^{m+1}.$$
In particular, we have $(2t-6)(3t-10)\leq \lambda-t(t-3)^m$. Moreover, since $m\geq 2$ and $t\geq 5$, it follows that 
$$t(t-3)^m-(2t-6)(3t-10)\geq t(t-3)^2-(2t-6)(3t-10)=(t-3)(t-4)(t-5)\geq 0.$$
Therefore,
$$\lambda-t(t-3)^m<t(t-3)^{m+1}-t(t-3)^m+(2t-6)(3t-10)\leq t(t-3)^{m+1}.$$
This proves \eqref{st:boundingthediff}.
\medskip

Henceforth, let $m\in \poi$ be as given by \eqref{st:boundingthediff}. Since $t\geq 5$, it follows that $0<2t-6<3t-10$ and so by \eqref{st:boundingthediff}, we have $\lambda - t(t-3)^m \geq (2t-6)(3t-10)>0$. Also, $\lambda-t(t-3)^m$ is even because both $\lambda$ and $t(t-3)^m$ are even.  Let $d\in \poi$ such that $\lambda - t(t-3)^m =2d$. Let $q,r$ be the (unique) integers for which $2d=(2t-6)q+r$ and $0\leq r< 2t-6$.
\medskip

Let  $s_1 = 3d-(3t-10)(q+1)$ and let $s_2 = (2t-6)(q+1)-2d$. Then we have:
\begin{align*}
(2t-6)s_1+(3t-10)s_2 &=(2t-6)(3d-(3t-10)(q+1)) +(3t-10)((2t-6)(q+1)-2d)\\
        &= (3(2t-6)-2(3t-10))d\\
        &= 2d.
\end{align*}

It follows that:

\sta{\label{st:linear_comb} 
We have $(2t-6)s_1+(3t-10)s_2 = \lambda -t(t-3)^m$.}

We further claim that:

\sta{\label{st:positive_coeff} We have $s_1,s_2\geq 0$ and $2(s_1+s_2)<t(t-3)^m$.}

By \eqref{st:linear_comb}, we have $(2t-6)s_1+(3t-10)(s_2-(2t-6)) = \lambda -t(t-3)^m-(2t-6)(3t-10)$. Thus, by \eqref{st:boundingthediff}, we have $(2t-6)s_1+(3t-10)(s_2-(2t-6))\geq 0$. Recall that $0<2t-6<3t-10$, and $s_2-(2t-6)=(2t-6)q-2d=-r\leq 0$. It follows that $s_1\geq 0$.
Also, note that $s_2=(2t-6)(q+1)-2d=2t-6-r$, and so $s_2\geq 0$ by the choice of $r$. Moreover, by \eqref{st:boundingthediff}, we have $\lambda-t(t-3)^m<t(t-3)^{m+1}$, and recall once again that $0<2t-6<3t-10$. Thus, by \eqref{st:linear_comb}, we have
$(2t-6)(s_1+s_2)<(2t-6)s_1+(3t-10)s_2 = \lambda -t(t-3)^m<t(t-3)^{m+1}$.
This proves \eqref{st:positive_coeff}.
\medskip

Now, from \eqref{st:linear_comb}, \eqref{st:positive_coeff} and Observation~\ref{obs:perimeter}, it follows that $(T_m, B_m)$ has an expansion of perimeter $\lambda$. This completes the proof of Theorem~\ref{thm:expansionexists}.
\end{proof}

Of course, we also want canonical territories to be $C_{2t-2}$-free:

\begin{theorem}\label{thm:expansionevenfree}
Every canonical territory is $C_{2t-2}$-free.
\end{theorem}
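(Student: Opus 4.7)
The plan is to induct on $m$ and show $T_m$ is $C_{2t-2}$-free for every $m\in \poi\cup \{0\}$, then extend to general expansions. The base $m=0$ is immediate since $T_0$ is a $t$-cycle and $t\neq 2t-2$ for $t\geq 5$. For the inductive step, the key structural observation is that $V(T_m)=V(T_{m-1})\cup V(B_m)$ and the only edges between these sets are the rung edges $x_ix'_i$. Any induced cycle $C$ in $T_m$ therefore falls into three types: (i) $C\subseteq T_{m-1}$, killed by the induction hypothesis; (ii) $C\subseteq B_m$, hence $C=B_m$ of length $t(t-3)^m\geq 2t>2t-2$ (using $t\geq 5$); or (iii) $C$ crosses the rung layer, using an even positive number $2r$ of rung edges and decomposing into $r$ arcs $P_1,\ldots,P_r$ in $T_{m-1}$ alternating with $r$ arcs $Q_1,\ldots,Q_r$ in $B_m$.

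For type (iii), each $Q_j$ is an arc of $B_m$ between branch vertices, so $|Q_j|=s_j(t-3)$ with $s_j\geq 1$, and $|C|=\sum_j|P_j|+2r+(t-3)\sum_j s_j$. Setting $|C|=2t-2$ leaves only very few arithmetic possibilities; the main candidate is $r=1$, $s_1=2$, $|P_1|=2$, for which $P_1$ must be the length-two arc $x_{i_1}\dd x_{i_1+1}\dd x_{i_2}$ of $B_{m-1}$ and $Q_1$ must be the short arc of $B_m$ containing the branch vertex $x'_{i_1+1}$. But then the rung edge $x_{i_1+1}x'_{i_1+1}$ is a chord of $C$, contradicting induced-ness. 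The remaining candidates (e.g., $r=1$, $s_1=1$, $|P_1|=t-1$, which only arises if $m=1$) are eliminated by similar chord arguments drawn from the boundary cycle $B_{m-1}$ itself; configurations with $r\geq 2$ either admit a shortcut along a single rung, reducing to a smaller induced cycle (killed by the induction), or force a chord between two branch vertices.

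Once $T_m$ is $C_{2t-2}$-free, I would address a general expansion $(T,B)$ of $(T_m,B_m)$, which attaches internally-disjoint paths $P_i$ of length $2t-6$ (and, for $i\in I$, a detour $Q_i$ of length $t-4$) at a stable set $\{x_i:i\in \poi_k\}\subseteq V(B_m)$. The crucial feature is that each attachment meets the rest of $T$ only through the three consecutive $B_m$-vertices $x^-_i,x_i,x^+_i$, so any induced cycle $C$ in $T$ that visits an attachment either enters and leaves through two of these three vertices (contributing a specific subpath of $P_i$ or the $P_i/Q_i$ detour to $C$) or lies entirely within a single attachment and forms a $t$-face of length $t\neq 2t-2$. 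Enumerating the possible contributions and combining with the previously obtained structure of induced cycles in $T_m$, one checks that the total length cannot equal $2t-2$: the attachment chunks naturally come in sizes $\geq 2t-6$ while the $T_m$-portions are controlled by the inductive analysis.

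The main obstacle will be the bookkeeping for type (iii) with $r\geq 2$ and, after expansion, the case where $C$ combines several rungs and several attachments across multiple layers. The danger is a ``clever'' $C$ that avoids every chord obstruction by routing deeply through $T_{m-2},T_{m-3},\ldots$; ruling this out uniformly likely requires a global invariant such as a length-modulo-$(t-3)$ bookkeeping together with an explicit winding-number count around the inner cycle $B_0$, rather than ad hoc case-checking at each level.
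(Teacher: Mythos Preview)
Your approach is the same as the paper's---induct on $m$ to show each $T_m$ is $C_{2t-2}$-free, then handle expansions---but you are making it harder than it is and leaving yourself with gaps you cannot close.

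For the inductive step on $T_m$: once you write $2t-2=\sum_j|P_j|+2r+(t-3)\sum_j s_j$ with $|P_j|\geq 1$ and $s_j\geq 1$, you get $2t-2\geq r+2r+(t-3)r=tr$, so $r\leq 2-2/t<2$ and hence $r=1$ is \emph{forced}. There is no $r\geq 2$ case to worry about. With $r=1$ you have only $s_1=1$ (chord $x_ax_{a+1}$ kills it for \emph{every} $m$, not just $m=1$) or $s_1=2$ (your chord $x_{i+1}x'_{i+1}$). The paper streamlines this further by not decomposing into $r$ arcs at all: it takes a single component $P$ of $C\cap B_m$, notes that $|E(P)|\leq 2t-5$ and $|E(P)|$ is divisible by $t-3$, hence $|E(P)|\in\{t-3,2t-6\}$, and dispatches both cases with one-line chord arguments.

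For the expansion step: the same trick works verbatim. Take one component $P$ of $C$ in a single attachment; since attachments meet $T_m$ only at $x_i^-,x_i,x_i^+$, the length of $P$ is $t-3$, $2t-6$, or $3t-10$, and $|E(P)|\leq 2t-5$ forces $t-3$ or $2t-6$. Each is killed by the same chord (through $x_i$ or through $x_i\dd y_i$) as before. Your proposed enumeration of ``attachment chunks'' and ``$T_m$-portions'' is unnecessary, and your worry in the last paragraph about a cycle ``routing deeply through $T_{m-2},T_{m-3},\ldots$'' is misplaced: once you know $T_m$ itself is $C_{2t-2}$-free by induction, you never need to look inside it---all the action is at a single component of $C$ outside $T_m$. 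No global invariant, no winding number, no modular bookkeeping is required.
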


\begin{proof}
Let us first prove that:

\sta{\label{st:evenfree} For all $m\in \poi\cup \{0\}$, the graph $T_m$ is $C_{2t-2}$-free.}

We proceed by induction on $m$. The case $m=0$ is trivial. Assume that for some $m\geq 1$, there is an induced $(2t-2)$-cycle $C$ in $T_m$. Since $m\geq 1$ and $t\geq 5$, it follows that $B_m$ has length $t(t-3)^m\geq 2t$, and so $V(C) \cap V(T_{m-1}) \neq \varnothing$. Also, by the inductive hypothesis, $T_{m-1}$ is $C_{2t-2}$-free, and so $V(C) \cap V(B_m) \neq \varnothing$. Let $P$ be a component of $C[V(C) \cap V(B_m)]$. Then $P$ is an induced path in $B_m$. Let $x$ and $y$ be the ends of $P$. Then $x$ has a neighbor $x'$ in $C$ such that $x' \in V(T_{m-1})$ and $y$ has a neighbor $y'$ in $C$ such that $y' \in V(T_{m-1})$. By the construction of $(T_m, B_m)$ and since $C$ is a $(2t-2)$-cycle, it follows that $x',y'\in V(B_{m-1})$, and the length of $P$ is at most $2t-5$, and divisible by $t-3$. Since $t\geq 5$, it follows that $P$ has length $t-3$ or $2t-6$. In the former case, $x'$ and $y'$ are adjacent in $T_{m-1}$ (and so in $C$), which in turn implies that $C[V(P)\cup \{x',y'\}]$ is a $t$-cycle, a contradiction. In the latter case, again by the construction of $(T_m, B_m)$ and since $C$ is a $(2t-2)$-cycle, it follows that  $P'=C[V(C) \setminus V(P)]$ is a path of length two in $B_{m-1}$ with ends $x',y'$. But then the middle vertex of $P'$ is adjacent in $T_m$ (and so in $C$) to the middle vertex of $P$, again a contradiction. This proves \eqref{st:evenfree}.
\medskip

Now, let $(T,B)$ be an expansion of $(T_m,B_m)$ for some $m\in \poi\cup \{0\}$, and suppose for a contradiction that there is an induced $(2t-2)$-cycle $C$ in $T$. By construction, every component of $T[V(T) \setminus V(T_m)]$ is either a path of length $2t-6$ or contains exactly one cycle, and that cycle has length $t$. Thus, we have $V(C) \cap V(T_{m}) \neq \varnothing$. Moreover, by \eqref{st:evenfree}, $T_{m}$ is $C_{2t-2}$-free, and so $V(C) \setminus V(T_m) \neq \varnothing$. Let $P$ be a component of $C[V(C) \setminus V(T_m)]$. Then $P$ is an induced path in $T[V(T) \setminus V(T_m)]$. Let $x$ and $y$ be the ends of $P$. Then $x$ has a neighbor $x'$ in $V(C)\setminus V(P)$ such that $x' \in V(T_{m})$ and $y$ has a neighbor $y'$ in $V(C)\setminus V(P)$ such that $y' \in V(T_{m})$. By the construction of $(T, B)$, we have $x',y'\in V(B_{m})$, the vertices $x',y'$ are distinct and $P$ has length $t-3$, $2t-6$ or $3t-10$. Since $t \geq 5$ and $C$ is a $(2t-2)$-cycle, it follows that $P$ has length $t-3$ or $2t-6$. In the former case, $x'$ and $y'$ are adjacent in $T_{m}$ (and so in $C$), which in turn implies that $C[V(P)\cup \{x',y'\}]$ is a $t$-cycle, a contradiction. In the latter case, $C[V(C) \setminus V(P)]$ is an induced path $P'$ of length two in $B_{m}$ with end $x',y'$. But then the middle vertex of $P'$ is adjacent in $T$ (and so in $C$) to the middle vertex of $P$, again a contradiction. This completes the proof of Theorem~\ref{thm:expansionevenfree}.
\end{proof}

Given a connected graph $G$ and $x,y\in V(G)$, the \textit{distance} in $G$ between $x$ and $y$, denoted $\dist_G(x,y)$, is the length of the shortest path in $G$ between $x, y$. Here is another useful property of a canonical territory $(T, B)$: if two vertices in $V(B)$ are far apart in the cycle $B$, then they are also relatively far apart in the whole graph $T$. More precisely,
    
\begin{theorem}\label{thm:expansiondistance}
Let $d\in \poi$, let $(T, B)$ be a canonical territory and let $x,y\in V(B)$ such that $\dist_T(x,y)\leq d$. Then $\dist_B(x,y)\leq t^{d+3t}$.
\end{theorem}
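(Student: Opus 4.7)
The plan is to prove the claim by strong induction on $d$, with a secondary induction on the parameter $m$ such that $(T, B)$ is an expansion of $(T_m, B_m)$.

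\textbf{Base case} ($d \leq 1$): The case $d = 0$ is trivial. For $d = 1$, a direct inspection of the construction shows that any edge of $T$ with both endpoints in $V(B)$ is in fact an edge of $B$: inside $T_m$, edges between two $V(B_m)$-vertices lie on $B_m$ (matching edges leave $B_m$ for $B_{m-1}$); inside the expansion, new edges with both endpoints in $V(B)$ lie on some $R_i$, while the removed vertices from the stable set and (for $i \in I$) the middle vertex $y_i$ are precisely the vertices excluded from $V(B)$. Thus $\dist_B(x, y) = 1 \leq t^{1+3t}$.

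\textbf{Inductive step} ($d \geq 2$): Let $P = p_0 \dd p_1 \dd \cdots \dd p_d$ be a shortest $xy$-path in $T$. If $p_1 \in V(B)$, the base case gives $\dist_B(x, p_1) = 1$, and the primary IH applied to $(p_1, y)$ at $T$-distance $d - 1$ gives $\dist_B(p_1, y) \leq t^{(d-1)+3t}$; the inequality $t^{d+3t} - t^{(d-1)+3t} \geq 1$ closes this case. Otherwise $p_1 \notin V(B)$, and there are two sub-cases. If $p_1$ lies in the expansion part (i.e., $p_1$ is a removed stable vertex or $y_i$ with $i \in I$), then inspection of the local structure shows that within at most three steps along $P$ the path reaches a vertex $p_\ell \in V(B)$ with $\dist_B(x, p_\ell) \leq 3t - 10$ (the length of $R_i$). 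The IH applied to $(p_\ell, y)$ at $T$-distance $d - \ell \leq d - 1$ together with the slack $(t-1) t^{(d-1)+3t} \geq 3t - 10$ yield the bound. If instead $p_1 \in V(T_{m-1})$, then $p_0 p_1$ is a matching edge (the only type entering $T_{m-1}$), so $p_0 = x$ is a matching vertex of $B_m$. Taking $j$ maximal with $p_j \in V(T_{m-1})$, the same reasoning forces $p_j p_{j+1}$ to be a matching edge with $p_{j+1}$ a matching vertex of $B_m$.

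At this point an auxiliary lemma is needed: for all $u, v \in V(T_{m-1})$ one has $\dist_T(u, v) = \dist_{T_{m-1}}(u, v)$. Its proof compares any detour through $B_m$ or the expansion attached to it with the direct route, using the subdivision factor $t - 3 \geq 2$ to conclude that detours are strictly longer. The lemma yields $\dist_{T_{m-1}}(p_1, p_j) \leq j - 1$, so the secondary IH at level $m - 1$ gives $\dist_{B_{m-1}}(p_1, p_j) \leq t^{(j-1)+3t}$. Lifting this through the subdivision factor, $\dist_{B_m}(p_0, p_{j+1}) \leq (t-3) \, t^{(j-1)+3t}$, and projecting onto $B$ costs at most an $O(t)$-factor (each expanded stable vertex stretches one $B_m$-edge into an $R_i$ of length at most $3t - 10$). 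Combining this with $\dist_B(p_{j+1}, y) \leq t^{(d-j-1)+3t}$ (from the IH on $d$, after a further bounded projection if $p_{j+1} \notin V(B)$) and a triangle inequality, one verifies that the final sum fits within $t^{d+3t}$.

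\textbf{Main obstacle}: The principal technical difficulty is the constant-chasing in the inductive step: the bound must land inside $t^{d+3t}$, and the slack $t^{3t}$ in the exponent is exactly what absorbs the $O(t)$-losses accumulated from projection through the expansion and lifting across levels. The auxiliary lemma $\dist_T = \dist_{T_k}$ on $V(T_k)$ — which is essential to chaining the secondary induction on $m$ — itself depends on the subdivision factor $t - 3 \geq 2$, and hence on the standing assumption $t \geq 5$.
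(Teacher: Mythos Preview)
Your overall plan---induct on $d$ and, when the shortest path dips below $B$, descend to $(T_{m-1},B_{m-1})$---is reasonable, but the induction does not close with $t^{d+3t}$ as the hypothesis. Take the worst case $j=d-1$, so $p_{j+1}=y$ and the second term vanishes. Your hypothesis at level $m-1$ gives $\dist_{B_{m-1}}(p_1,p_{d-1})\leq t^{(d-2)+3t}$; lifting to $B_m$ multiplies by $t-3$; and projecting from $B_m$ to $B$ multiplies by up to $(3t-8)/2$, since each stable vertex on the $B_m$-path replaces a $2$-path by an $R_i$ of length up to $3t-8$. The combined factor $\tfrac{(t-3)(3t-8)}{2}$ exceeds $t^2$ once $t\geq 16$, so the bound you obtain is strictly larger than $t^{d+3t}$. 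Your claim that ``the slack $t^{3t}$ \ldots absorbs the $O(t)$-losses'' is exactly where this fails: that slack is already baked into the hypothesis at every level, so nothing is left over to pay the lift-and-project cost you incur at each inductive step.

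The paper avoids this by separating the two layers. It first proves, by induction on $d$ alone, the much tighter bound $\dist_{B_m}(x,y)\leq (t-2)^d$ for $x,y\in V(B_m)$ with $\dist_{T_m}(x,y)\leq d$; this survives the factor $t-3$ at each descent precisely because the inductive hypothesis carries no extraneous slack. Only afterwards is the expansion handled, once: pick $u,v\in V(B)\cap V(B_m)$ within $B$-distance at most $\tfrac{3}{2}t-1$ of $x,y$, show that every shortest $T$-path between $u$ and $v$ stays inside $T_m$ (so $\dist_T(u,v)=\dist_{T_m}(u,v)$), apply the $(t-2)^{d+O(t)}$ bound, and absorb the entire expansion cost into the $t^{3t}$ factor in a single shot. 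Your argument can be repaired along the same lines: establish the $(t-2)^d$ bound for the unexpanded territories first, and treat the expansion as a one-time post-processing step rather than folding it into the main induction.
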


\begin{proof}
We begin with the following:

\sta{\label{st:distanceunexpanded} 
Let $d\in \poi$ and let $m\in \poi\cup \{0\}$. Let $x,y\in V(B_m)$ such that $\dist_{T_m}(x,y)\leq d$. Then $\dist_{B_m}(x,y)\leq (t-2)^d$.}

The proof is by induction on $d$ (for all $m\in \poi \cup \{0\}$). The case $d=1$ is trivial. Assume that $d\geq 2$. Let $P$ be a path of length at most $d$ in $T_m$ from $x$ to $y$. Let $u$ be the unique neighbor of $x$ in $P$; thus, $u$ and $y$ are distinct. Assume that $u\in V(B_m)$. Then $u\dd P\dd y$ is a path of length at most $d-1$ in $T_m$ between $u,y\in V(B_m)$. It follows from the inductive hypothesis that $\dist_{B_m}(u,y)\leq (t-2)^{d-1}$. But then 
$$\dist_{B_m}(x,y)\leq (t-2)^{d-1}+1<(t-2)^{d}.$$
Therefore, we may assume that $u\in V(T_m)\setminus V(B_m)$. Since $u$ is adjacent to $x\in V(B_m)$, it follows that $u\in  V(B_{m-1})$; in particular, $u$ is the unique neighbor of $x$ in $V(B_{m-1})$ and $x$ is the unique neighbor of $u$ in $V(B_{m})$. Traversing $u\dd P\dd y$ from $u$ to $y$, let $w$ be the first vertex in $V(P)\cap V(B_m)$; note that $w$ exists because $y\in V(P)\cap V(B_m)$, and in fact $w$ and $y$ may be the same. Let $v$ be the unique neighbor of $w$ in $u\dd P\dd w$. It follows that $v\in  V(B_{m-1})$; in particular, $v$ is the unique neighbor of $w$ in $V(B_{m-1})$ and $w$ is the unique neighbor of $v$ in $V(B_{m})$. Note that $u\dd P \dd v$ is a path of length smaller than $d$ in $T_{m-1}$ between $u,v\in V(B_{m-1})$. Thus, by the inductive hypothesis, we have $\dist_{B_{m-1}}(u,v)\leq (t-2)^{d-1}$. This, along with the construction of $(T_m, B_m)$, implies that $\dist_{B_{m}}(x,w)\leq (t-3)(t-2)^{d-1}$. Moreover, $w\dd P\dd y$ is a path of length smaller than $d$ in $T_m$ between $w,y\in V(B_{m})$. Thus, by the inductive hypothesis, we have $\dist_{B_m}(w,y)\leq (t-2)^{d-1}$. But now
$$\dist_{B_m}(x,y)\leq (t-3)(t-2)^{d-1}+(t-2)^{d-1}=(t-2)^{d}.$$ This proves \eqref{st:distanceunexpanded}.
\medskip

Now, let $(T,B)$ be an expansion of $(T_m,B_m)$ for some $m\in \poi\cup \{0\}$ and let $x,y\in V(B)$ such that $\dist_T(x,y)\leq d$. Since $t\geq 5$ and by the construction of $(T,B)$, there exists $u\in V(B)\cap V(B_m)$ such that $\dist_B(x,u)\leq \frac{3}{2}t-1$ (for instance, if $x\in V(B)\cap V(B_m)$, then $u=x$ works). Similarly, there exists $v\in V(B)\cap V(B_m)$ such that $\dist_B(v,y)\leq \frac{3}{2}t-1$. 

We claim that:

\sta{\label{st:comparedistance}
For every shortest path $P$ in $T$ from $u$ to $v$, we have $V(P)\subseteq V(T_m)$. Consequently, $\dist_{T}(u,v)=\dist_{T_m}(u,v)$.}

Suppose for a contradiction that there is a shortest (induced) path $P$ in $T$ from $u$ to $v$ such that $V(P)\setminus V(T_m)\neq \varnothing$. Let $P'$ be a component of $P[V(P)\setminus V(T_m)]$. Then $P'$ is an induced path in some component of $T[V(T) \setminus V(T_m)]$. It follows that $u,v\notin V(P')$ because $u,v\in V(B_m)\subseteq V(T_m)$. Let $u'$ and $v'$ be the ends of $P'$, let $u''$ be the unique neighbor of $u'$ in $V(P)\setminus V(P')$ and let $v''$ be the unique neighbor of $v'$ in $V(P)\setminus V(P')$; in particular, $u'',v''$ are not adjacent in $T$. From the construction of $(T, B)$, it follows that $u'',v''\in V(B_{m})$, there is a path $u''\dd z\dd v''$ of length two in $B_m$ from $u''$ to $v''$, and $P'$ has length $2t-6$ or $3t-10$. Since $t \geq 5$, it follows that $|V(P')|\geq 5$. Consequently, $Q=T[(V(P)\setminus V(P'))\cup \{z\}]$ is a connected induced subgraph of $T$ with $x,y\in V(Q)$ and $|V(Q)|=|V(P)|-|V(P')|+1\leq |V(P)|-4$.
But now there is a path from $x$ to $y$ in $Q$, and so in $T$, which is strictly shorter than $P$, a contradiction. This proves \eqref{st:comparedistance}.
\medskip

From the choice of $u,v\in V(B)\cap V(B_m)$ and the assumption that $\dist_T(x,y)\leq d$, it follows that $\dist_T(u,v)\leq d+3t-2$ in $T$. This, along with \eqref{st:comparedistance}, implies that $\dist_{T_m}(u,v)\leq d+3t-2$. Thus, by \eqref{st:distanceunexpanded}, we have $\dist_{B_m}(u,v)\leq(t-2)^{d+3t-2}$. Since $(T,B)$ is an expansion of $(T_m,B_m)$, it follows from the construction that $\dist_B(u,v)\leq 3t(t-2)^{d+3t-2}$. But now since $\dist_B(x,u)<3t/2$ and $\dist_B(v,y)<3t/2$, it follows
$$\dist_B(x,y)<3t+3t(t-2)^{d+3t-2}=3t(1+(t-2)^{d+3t-2})\leq 3t\cdot t^{d+3t-3}<t^{d+3t}.$$
This completes the proof of Theorem~\ref{thm:expansiondistance}.
\end{proof}

\section{The main construction}\label{sec:mainconst}

Our construction of the graph $G_t$ starts with a $3$-regular, $3$-edge-colorable graph of large girth. There are various ways to build such graphs. For instance, there are bipartite $3$-regular graphs of arbitrarily large girth \cite{regbipgirth}, which are then $3$-edge-colorable by Galvin's theorem \cite{galvin}. Another example is the following result from \cite{highgirth}:

\begin{theorem}[Linial and Simkin \cite{highgirth}]\label{thm:highgirth}
    For every integer $g\geq 3$ and every even integer $n\geq 2^{2g}$, there is an $n$-vertex $3$-regular graph of girth at least $g$ with a Hamiltonian cycle.
\end{theorem}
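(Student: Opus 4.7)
The plan is to realize the graph as $H\cup M$, where $H=0\dd 1\dd \cdots \dd (n-1)\dd 0$ is the Hamiltonian cycle on $[n]=\{0,\dots,n-1\}$ and $M$ is a perfect matching on $[n]$ disjoint from $E(H)$, chosen uniformly at random. Every $3$-regular graph on $n$ vertices that admits a Hamiltonian cycle arises in this way, so this is without loss of generality.

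To control the girth, let $N$ denote the number of cycles of length less than $g$ in $H\cup M$. Any such cycle $D$ uses $k\in\{1,\dots,\lfloor g/2\rfloor\}$ matching edges alternating with $k$ arcs of $H$, whose lengths are positive integers summing to less than $g-k$. Parameterizing $D$ by a starting chord endpoint ($n$ choices), the unconstrained chord endpoints ($n^{k-1}$ choices, the last being forced by the closing condition), the tuple of arc lengths ($\leq \binom{g}{k}$ choices), and the arc orientations ($2^{k}$ choices), and dividing by $2k$ to account for cyclic symmetries, the number of potential such cycles $D$ is at most $n^{k}\cdot 2^{k}\binom{g}{k}/(2k)$. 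The probability that a given $k$-subset of disjoint pairs all lies in $M$ is at most $1/(n-2k)^{k}$. Hence, for $n\geq 2^{2g}=4^{g}$,
\[
\mathbb{E}[N]\leq\sum_{k=1}^{\lfloor g/2\rfloor}\frac{2^{k}\binom{g}{k}}{2k}\cdot\frac{n^{k}}{(n-2k)^{k}}\leq(1+o(1))\cdot 3^{g}=o(n),
\]
so by Markov's inequality some $M$ achieves $N\leq 2\cdot 3^{g}$.

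To destroy these remaining short cycles while preserving $3$-regularity and Hamiltonicity, I would perform local surgery. For each short cycle $D$, pick a matching edge $uv\in M\cap E(D)$, delete the vertices $u,v$, and add the edges $u^{-}v^{-}$ and $u^{+}v^{+}$ (or $u^{-}v^{+}$ and $u^{+}v^{-}$ if needed to avoid a multi-edge), where $u^{\pm},v^{\pm}$ denote the two $H$-neighbours of $u$ and of $v$. One checks that the new graph is $3$-regular, that the two arcs of $H$ broken by the deletion splice together into a single Hamiltonian cycle on $n-2$ vertices, and that $D$ is destroyed. Starting with $n'=n+O(3^{g})$ instead of $n$ (still $\leq(1+o(1))n$, and in particular still $\geq 2^{2g}$) then yields the required graph on exactly $n$ vertices.

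The main obstacle is that each surgery may introduce \emph{new} short cycles through the freshly added edges $u^{\pm}v^{\pm}$. I would address this by either (i) choosing the matching edge $uv$ and the rewiring scheme carefully so that no new short cycle is created, which requires bounding the number of ``bad'' rewirings at each step, or (ii) iterating the surgery process and arguing termination in a bounded number of rounds. A potentially cleaner alternative, avoiding surgery entirely, is to apply the Lov\'{a}sz Local Lemma directly to the random matching model to produce an $M$ with zero short cycles; the difficulty there is controlling the dependency degree, since many short cycles share a common chord.
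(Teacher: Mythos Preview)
This theorem is not proved in the paper at all; it is quoted as a black-box result of Linial and Simkin and invoked once to produce the auxiliary graph $\Gamma$. There is therefore no ``paper's own proof'' to compare your attempt against.

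As for the attempt itself, the overall model (uniform perfect matching on top of a fixed Hamiltonian cycle) is the right one, but the argument has genuine gaps. The first-moment count is garbled: once a starting vertex and the tuple of arc lengths and orientations are fixed, all $2k$ chord endpoints are determined, so the extra $n^{k-1}$ factor for ``unconstrained chord endpoints'' is double-counting. The correct count of potential short cycles using $k$ chords is $O\bigl(n\cdot 2^{k}\binom{g}{k}\bigr)$, each occurring with probability $\Theta(n^{-k})$, which gives $\mathbb{E}[N]=O(g)$ rather than $3^{g}$; your weaker conclusion $\mathbb{E}[N]=o(n)$ is still true, but the derivation as written does not parse.

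The surgery step is where the real problems lie. Of the two rewirings you propose, only $u^{-}v^{-},\,u^{+}v^{+}$ splices the two arcs of $H\setminus\{u,v\}$ into a single Hamiltonian cycle; the alternative $u^{-}v^{+},\,u^{+}v^{-}$ yields two vertex-disjoint cycles, so you cannot fall back on it ``to avoid a multi-edge'' without losing Hamiltonicity. More importantly, you yourself flag the central difficulty --- each surgery may create new short cycles --- and offer only programmatic suggestions (choose $uv$ carefully, iterate, or try the Local Lemma) rather than an argument. None of these is routine: in the LLL route the dependency degree is large because a single vertex lies in many potential short cycles, and the iterative route needs a potential function to guarantee termination. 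Finally, ``start from $n'=n+O(3^{g})$ and land at $n$'' presumes you know in advance exactly how many deletions will be performed, which you do not. As written, this is a plausible outline but not a proof.
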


Let 
$$g=t^{5t}.$$
By Theorem~\ref{thm:highgirth}, there is an $3$-regular graph $\Gamma$ of girth at least $g$ such that there is a Hamiltonian cycle $\Omega$ in $\Gamma$. For the remainder of the paper, let $g$ and $\Gamma$ be fixed. Let $(M_1,M_2)$ be a partition of $E(\Omega)$ into two perfect matchings of $\Gamma$, and let $M_3=E(\Gamma)\setminus E(\Omega)$. Then $(M_1,M_2,M_3)$ is a partition of $E(\Gamma)$ into three perfect matchings. 

For each $i\in \{1,2,3\}$, let $\Gamma_i$ be the spanning subgraph of $\Gamma$ with edge set $E(\Gamma)\setminus M_i$ and let $\mca{K}_i$ be the set of all components of $\Gamma_i$. Let $$\mca{K}=\mca{K}_1\cup \mca{K}_2\cup \mca{K}_3.$$ Then every element of $\mca{K}$ is an even cycle in $\Gamma$ of length at least $g$ (in fact, we have $\mca{K}_3=\{\Omega\}$). Moreover, it is readily seen that:

\begin{observation}\label{obs:cycles}
For every $e\in E(\Gamma)$, there are exactly two cycles in $\mca{K}$ containing $e$. Specifically, if $e\in M_i$ for $i\in\{1,2,3\}$, then assuming $\{i',i''\}=\{1,2,3\}\setminus \{i\}$, one of those two cycles belongs to $\mca{K}_{i'}$ and the other one belongs to $\mca{K}_{i''}$.
\end{observation}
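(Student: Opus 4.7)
The plan is to exploit that each $\Gamma_i$ is $2$-regular. Indeed, fix $i\in\{1,2,3\}$ and let $\{i',i''\}=\{1,2,3\}\setminus\{i\}$. Since $(M_1,M_2,M_3)$ partitions $E(\Gamma)$, we have $E(\Gamma_i)=E(\Gamma)\setminus M_i=M_{i'}\cup M_{i''}$, a disjoint union of two perfect matchings of $\Gamma$. Every vertex of $\Gamma_i$ is therefore incident with exactly one edge from $M_{i'}$ and one from $M_{i''}$, so $\Gamma_i$ is $2$-regular. Consequently every component $K\in\mca{K}_i$ is a cycle, and since its edges alternate between $M_{i'}$ and $M_{i''}$, this cycle has even length. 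Moreover, $K$ is a cycle in $\Gamma$, so its length is at least the girth $g$ of $\Gamma$. (These facts, though not explicitly claimed in the observation, underlie the preceding sentence of the paper and are reused below.)

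Now fix $e\in E(\Gamma)$; by the partition property there is a unique $i\in\{1,2,3\}$ with $e\in M_i$, and setting $\{i',i''\}=\{1,2,3\}\setminus\{i\}$, we have $e\notin E(\Gamma_i)$ but $e\in E(\Gamma_{i'})\cap E(\Gamma_{i''})$. It follows immediately that no member of $\mca{K}_i$ contains $e$. On the other hand, since $\Gamma_{i'}$ is $2$-regular, each of its vertices (and thus each of its edges) lies in a unique component; hence $e$ belongs to exactly one cycle in $\mca{K}_{i'}$. Symmetrically, $e$ belongs to exactly one cycle in $\mca{K}_{i''}$. Combining these three assertions and recalling that $\mca{K}_1,\mca{K}_2,\mca{K}_3$ are pairwise disjoint (as distinct $\mca{K}_j$ live in distinct subgraphs $\Gamma_j$), we conclude that $e$ is contained in exactly two cycles of $\mca{K}$, one from each of $\mca{K}_{i'}$ and $\mca{K}_{i''}$, as required.

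There is no genuine obstacle here: the entire content is that a $2$-regular graph is a disjoint union of cycles, applied to each of the three subgraphs $\Gamma_{i}$. The only point that merits care is ensuring that $\mca{K}_{i'}$ and $\mca{K}_{i''}$ are disjoint as \emph{families of subgraphs of $\Gamma$}, which holds because a cycle in $\mca{K}_{i'}$ uses edges from $M_i\cup M_{i''}$ while one in $\mca{K}_{i''}$ uses edges from $M_i\cup M_{i'}$, and both cycles would have to use their full edge sets; if these two cycles agreed, their common edge set would be contained in $M_i$, contradicting the fact that cycles must have at least three edges while matchings contain no path of length two.
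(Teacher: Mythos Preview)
Your argument is correct and is precisely the routine verification the paper intends; the paper states this as an Observation with the phrase ``it is readily seen that'' and gives no proof, so there is nothing to compare against beyond noting that your reasoning (each $\Gamma_j$ is $2$-regular, hence a disjoint union of cycles, and the two relevant cycles through $e$ are distinct because a matching contains no cycle) is exactly the expected justification.
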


We will often use the following lemma, which describes how short subpaths of distinct cycles in $\mca{K}$ interact:

\begin{lemma}\label{lem:twocycles}
Let $K,K'\in \mca{K}$ be distinct. Let $L$ be a path of length smaller than $t^{5t-1}$ in $K$ and let $L'$ be a path of length smaller than $t^{5t-1}$ in $K'$ such that $V(L)\cap V(L')\neq \varnothing$. Then either
\begin{itemize}
\item $V(L)\cap V(L')=\{u\}$ where $u$ is an end of at least one of $L,L'$ and $V(L)\setminus \{u\}$ and $V(L')\setminus \{u\}$ are anticomplete in $\Gamma$; or
\item $V(L)\cap V(L')=\{u,v\}$ where $u,v$ are distinct and adjacent in both $L$ and $L'$, and $L\setminus \{u,v\}$ and $L'\setminus \{u,v\}$ are anticomplete in $\Gamma$.
\end{itemize}
\end{lemma}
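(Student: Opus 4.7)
My plan is to exploit the large girth of $\Gamma$ by analyzing the subgraph $F = L \cup L'$. Since each of $L, L'$ has length strictly less than $t^{5t-1}$, we have $|E(F)| < 2t^{5t-1} \leq t^{5t} = g$, strictly fewer edges than the girth of $\Gamma$; hence $F$ contains no cycle. Combined with the hypothesis $V(L) \cap V(L') \neq \varnothing$, which makes $F$ connected, this forces $F$ to be a tree. The same edge count gives a rigidity statement that I will use throughout: every edge of $\Gamma$ with both endpoints in $V(F)$ is already an edge of $F$, for otherwise adding it would create a cycle in $\Gamma$ of length at most $|E(F)| + 1 < g$.

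Next I would focus on $S := V(L) \cap V(L')$. Because $F$ is a tree, $S$ must consist of consecutive vertices along $L$ (and along $L'$): a vertex $b \in V(L) \setminus S$ lying strictly between two members of $S$ on $L$ would yield two distinct paths in $F$ between those members (one through $b$ in $L$, one inside $L'$), contradicting tree-ness. Consequently $L[S]$ and $L'[S]$ are each spanning paths of $S$ inside $F$. If their edge sets differed, the union would have at least $|S|$ edges on $|S|$ vertices and so would contain a cycle; thus $L[S] = L'[S]$ as subgraphs.

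Now I would bring in the partition structure. Distinct cycles in a common class $\mca{K}_i$ are vertex-disjoint (as components of $\Gamma_i$), so $K \in \mca{K}_i$ and $K' \in \mca{K}_j$ must satisfy $i \neq j$; setting $\{k\} = \{1,2,3\}\setminus\{i,j\}$, every edge in $E(K) \cap E(K')$ lies in $E(\Gamma)\setminus(M_i \cup M_j) = M_k$. Each edge of the common path $L[S] = L'[S]$ is such a shared edge, so it lies in the matching $M_k$; a path whose edge set sits inside a matching has at most one edge, giving $|S| \leq 2$. The case $|S| = 2$ yields $S = \{u,v\}$ with $uv$ the shared edge, placing us in the second bullet. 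For $|S| = 1$, say $S = \{u\}$, the $3$-regularity of $\Gamma$ prevents $u$ from being interior to both $L$ and $L'$: otherwise two of the four incident path-edges at $u$ would coincide, forcing a second common vertex into $S$. The anticompleteness statements in both bullets then drop out of the rigidity observation, since any putative edge in $\Gamma$ from $V(L) \setminus S$ to $V(L') \setminus S$ would have to lie in $L$ or in $L'$, which would drop one of its endpoints into $V(L) \cap V(L') = S$.

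The most delicate step is the equality $L[S] = L'[S]$; once this is in hand, the reduction to $|S| \leq 2$ and the anticompleteness deductions are essentially bookkeeping with the matching $M_k$ and the $3$-regularity of $\Gamma$. I would expect most of the care in the write-up to go into organizing the tree-in-$\Gamma$ argument cleanly, particularly the consecutiveness claim and its companion forcing $L[S]$ and $L'[S]$ to agree.
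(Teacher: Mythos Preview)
Your proposal is correct and follows essentially the same route as the paper's proof. The only cosmetic difference is the starting point: the paper takes $H=\Gamma[V(L)\cup V(L')]$, shows it is a tree by the girth bound, and deduces $H=L\cup L'$; you instead take $F=L\cup L'$, show it is a tree, and then prove your ``rigidity'' statement that $\Gamma[V(F)]=F$---these are the same fact read in opposite directions, and thereafter both arguments reduce the common part to a path whose edges lie in a single matching $M_k$, forcing $|S|\le 2$, with the degree-three observation handling the $|S|=1$ case.
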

\begin{proof}
Let $x,y$ be the ends of $L$ and let $x',y'$ be the ends of $L'$. Without loss of generality, we may assume that $K \in \mca{K}_1$ and $K' \in \mca{K}_2$. Let $H=\Gamma[V(L)\cup V(L')]$. Since $V(L)\cap V(L')\neq \varnothing$ and since $\Gamma$ is $3$-regular with girth at least $g=t^{5t}>2t^{5t-1}>|V(H)|$, it follows that $H$ is a tree of maximum degree at most three. In particular, since $L\cup L'$ is a connected spanning subgraph of $H$, we deduce that $H=L\cup L'$. Traversing $L$ from one end to another, let $u$ and $v$ be the first and the last vertex of $L$ that belong to $V(L')$, as well. Since $H=L\cup L'$ is a tree, it follows that $Q=u\dd L\dd v=u\dd L'\dd v$ is the unique path in $H$ from $u$ to $v$, and $V(L)\setminus V(Q)$ and $V(L')\setminus V(Q)$ are anticomplete in $H$ (and so in $\Gamma$). If $u=v$, then the first bullet of \ref{lem:twocycles} holds (note that $u$ is an end of at least one of $L, L'$ because $u$ has degree at most three in $H$). So assume that $u\neq v$. Then we have $E(Q)\subseteq E(L)\cap E(L')\subseteq E(K)\cap E(K')\subseteq M_3$. Since $Q$ is a path of non-zero length in $\Gamma$ and $M_3$ is a matching in $\Gamma$, it follows that $|E(Q)|=1$; that is, $u$ and $v$ are adjacent in both $L$ and $L'$. But now the second bullet of \ref{lem:twocycles} holds. This completes the proof of Lemma~\ref{lem:twocycles}.
\end{proof}

We now construct our graph $G_t$. Start with the graph $\Gamma$. For each $K\in \mca{K}$, let $\lambda_K$ be the length of $K$. Then $\lambda_K$ is even and $\lambda_K\geq g>t^3$. By Theorem~\ref{thm:expansionexists}, there is a canonical territory of perimeter $\lambda_K$. Thus, there are graphs $(T_K:K\in \mca{K})$ with the following specifications:
\begin{itemize}
    \item For every $K\in \mca{K}$, $(T_K,K)$ is a canonical territory of perimeter $\lambda_K$.
    \item For every $K\in \mca{K}$, we have $V(T_K) \cap V(\Gamma) =V(K)$. 
    \item For all distinct $K,K'\in \mca{K}$, we have $V(T_K) \cap V(T_{K'}) = V(K) \cap V(K')$.
\end{itemize}

We define
$$G_t=\Gamma\cup \left(\bigcup_{K\in \mca{K}}T_K\right).$$
See Figure~\ref{fig:mainconst}.
\begin{figure}[t!]
    \centering
    \includegraphics[scale=0.7]{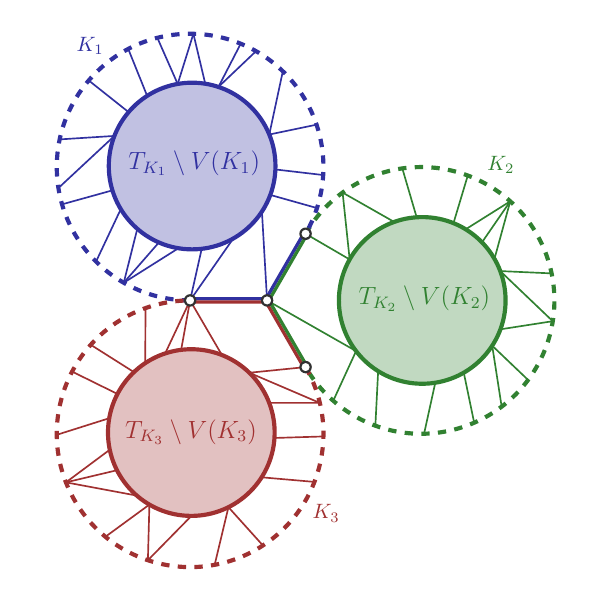}
    \caption{Construction of $G_t$: the three canonical territories $(T_{K_1},K_1), (T_{K_2},K_2)$ and $(T_{K_3},K_3)$ for three cycles $K_1,K_2,K_3\in \mca{K}$ in $\Gamma$.}
    \label{fig:mainconst}
\end{figure}

It follows that $\Gamma$ is an induced subgraph of $G_t$, and the sets $(V(T_K)\setminus V(K):K\in \mca{K})$ are pairwise anticomplete in $G_t$, and for every $K\in \mca{K}$, the sets $V(T_K)\setminus V(K)$ and $V(\Gamma)\setminus V(K)$ are anticomplete in $G_t$. In order to prove Theorem~\ref{thm:maineven}, it suffices to show that:

\begin{restatable}{theorem}{mainevenfree}\label{thm:mainevenfree}
The graph $G_t$ is $C_{2t-2}$-free.
\end{restatable}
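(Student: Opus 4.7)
The plan is to derive a contradiction by translating a hypothetical induced $(2t-2)$-cycle $C$ of $G_t$ into a short closed walk in $\Gamma$ and exploiting the girth of $\Gamma$ together with the $C_{2t-2}$-freeness of each canonical territory $T_K$.

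The first step is a decomposition of $C$. Because $V(T_K)\setminus V(K)$ is anticomplete in $G_t$ to $V(G_t)\setminus V(T_K)$ for every $K\in\mca{K}$, each vertex of $V(C)$ outside $V(\Gamma)$ has both its $C$-neighbors in the same $V(T_K)$. Walking around $C$, this partitions its edges into a cyclic list of arcs $\pi_1,\ldots,\pi_r$ with endpoints $u_1,\ldots,u_r\in V(\Gamma)$, where each $\pi_i$ is either (i) a single edge of $\Gamma$, or (ii) an induced path of length at least two whose interior lies in $V(T_{K_i})\setminus V(K_i)$ and whose endpoints $u_i,u_{i+1}$ lie on $K_i$, for some $K_i\in\mca{K}$. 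We have $r\leq 2t-2$; at least one arc is of type (ii) (otherwise $C$ would be a cycle in $\Gamma$, contradicting its girth $g=t^{5t}$); and $C$ is not contained in a single $T_K$ (by Theorem~\ref{thm:expansionevenfree}), so $r\geq 2$.

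The second step builds a short closed walk in $\Gamma$. For each type-(ii) arc, Theorem~\ref{thm:expansiondistance} applied with $d=|\pi_i|\leq 2t-2$ gives $\dist_{K_i}(u_i,u_{i+1})\leq t^{(2t-2)+3t}=t^{5t-2}$. Replacing every type-(ii) arc $\pi_i$ by a shortest $K_i$-path $\tilde\pi_i$ from $u_i$ to $u_{i+1}$, and keeping type-(i) arcs unchanged, yields a closed walk $W$ in $\Gamma$ of total length at most $(2t-2)\cdot t^{5t-2}<t^{5t}=g$. Since $\Gamma$ has girth $g$, the edges of $W$ are acyclic, so $W$ is supported on a subtree $T_W\subseteq\Gamma$ and traverses each edge of $T_W$ an even number of times. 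Being a nontrivial closed walk in a tree, $W$ must backtrack at some step; and since each $\tilde\pi_i$ is simple, the backtracking must occur at some $u_i$: the last edge of $\tilde\pi_{i-1}$ and the first edge of $\tilde\pi_i$ coincide in a common edge $e=wu_i\in E(K_{i-1})\cap E(K_i)$.

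The third step is a case analysis on whether $K_{i-1}=K_i$. If $K_{i-1}=K_i=:K$, then $\pi_{i-1}\cup\pi_i$ is an induced path in the canonical territory $T_K$ from $u_{i-1}$ to $u_{i+1}$, and the backtracking forces $u_{i-1}$ and $u_{i+1}$ to lie on the same $K$-arc from $u_i$ through $w$; I would argue that the short closing $K$-subpath, together with $\pi_{i-1}\cup\pi_i$, produces either a chord of $C$ in $G_t$ or an induced $(2t-2)$-cycle inside $T_K$, the latter contradicting Theorem~\ref{thm:expansionevenfree}. If $K_{i-1}\neq K_i$, then Observation~\ref{obs:cycles} identifies $K_{i-1}$ and $K_i$ as the only two cycles of $\mca{K}$ containing $e$, and Lemma~\ref{lem:twocycles} applied to $\tilde\pi_{i-1}$ and $\tilde\pi_i$ (both of length less than $t^{5t-1}$) forces their intersection to be exactly $\{w,u_i\}$, with the remaining vertices of the two paths anticomplete in $\Gamma$; tracing this through the type-(i)/(ii) structure of $\pi_{i-1},\pi_i$ should locate a chord of $C$ in $G_t$, contradicting that $C$ is induced.

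The main obstacle is the final case analysis: each combination of types (i) and (ii) for the arcs $\pi_{i-1},\pi_i$ must be handled separately, and in each subcase one must either pinpoint an explicit chord of $C$ in $G_t$ or verify that the candidate induced $(2t-2)$-cycle inside a canonical territory really is induced (so that it contradicts Theorem~\ref{thm:expansionevenfree}). The most delicate subcase is $K_{i-1}=K_i$ with both arcs of type (ii), where the closing $K$-subpath must be shown to be an induced path in $T_K$ — and even then, one must choose the length of that closing path carefully to hit exactly $2t-2$ — appealing to the detailed structure of canonical territories developed in Section~\ref{sec:canonical}.
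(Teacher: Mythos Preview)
Your first two steps --- decomposing $C$ into $\Gamma$-edges and territory-sectors, then replacing each sector by a short boundary path to get a closed walk $W$ in $\Gamma$ of total length below the girth --- match the paper exactly; the paper also uses that $W$ is therefore supported on a tree.

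The gap is in step three. A single backtrack is too weak to force a contradiction, and neither of your subcases goes through. When $K_{i-1}\neq K_i$ (both type~(ii)), all you learn is that the edge $wu_i$ lies in $E(K_{i-1})\cap E(K_i)$; but $w$ need not lie on $C$ at all (the $C$-neighbours of $u_i$ are in territory interiors), so there is no chord to exhibit, and Lemma~\ref{lem:twocycles} only constrains the \emph{boundary} paths $\tilde\pi_{i-1},\tilde\pi_i$, not the sectors $\pi_{i-1},\pi_i$ of $C$. When $K_{i-1}=K_i=K$, you need a closing path in $T_K$ of length exactly $2t-2-|\pi_{i-1}|-|\pi_i|$, anticomplete to the interiors of both sectors; there is no mechanism producing one, and $|\pi_{i-1}|+|\pi_i|$ can be anywhere from $4$ to $2t-3$. (The mixed type-(i)/(ii) cases fare no better.) Your own closing paragraph essentially concedes that you do not see how to complete this step.

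The paper extracts much more from $W$. From the full parity condition (every edge of $W$ is covered an even number of times) it derives $D\subseteq E(C)\cap E(\Gamma)\subseteq F$, where $D$ is the set of edges lying in exactly one $L_P$. It then uses Observations~\ref{obs:expansiondetour1} and~\ref{obs:expansiondetour2} --- structural facts about canonical territories that you never invoke --- to prove the key inequality $|E(P)|+|D_P|\geq t$ for every \emph{maximal} sector $P$. The contradiction is then global: a double count over the parameters $\sigma=|\hat{\mca{P}}|$, $\rho$, $\kappa$ forces $\sigma\in\{2,t-1\}$ with several equalities tight, and only at that point does one locate (for $\sigma=2$) the specific edge with both ends on $C$ but not in $E(C)$, or (for $\sigma=t-1$) two cycles in the same $\mca{K}_k$ that would have to share an edge. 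There does not appear to be a local shortcut of the kind you sketch.
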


\begin{restatable}{theorem}{mainevencrritical}
    \label{thm:mainevencrritical}
    For every $e\in E(G_t)$, there is an induced $(2t-2)$-cycle in $G_t-e$.
\end{restatable}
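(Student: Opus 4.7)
The plan is to show, for each $e = uv \in E(G_t)$, that $G_t - e$ contains an induced $(2t-2)$-cycle. Since $G_t$ is $C_{2t-2}$-free by Theorem~\ref{thm:mainevenfree}, any such cycle must contain both $u$ and $v$ with $e$ as a chord in $G_t$; equivalently, the target is a pair of internally disjoint induced paths $L_1, L_2$ of length $t-1$ from $u$ to $v$ in $G_t$, neither using $e$, such that $L_1 \cup L_2$ has no chord in $G_t$ besides $e$. The proof splits on whether $e$ lies in $E(\Gamma)$.

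In Case 1, $e \in E(\Gamma)$. By Observation~\ref{obs:cycles} there are exactly two cycles $K, K' \in \mca{K}$ containing $e$, so $e$ lies on the boundaries of the canonical territories $T_K$ and $T_{K'}$. The canonical construction (either inside the recursive $(T_m, B_m)$ piece or from the expansion step) yields an induced $t$-cycle $D_K$ through $e$ in $T_K$; set $L_1 := D_K - e$, an induced path of length $t-1$ from $u$ to $v$ in $T_K$. Analogously take $L_2$ inside $T_{K'}$. By Lemma~\ref{lem:twocycles}, $V(K) \cap V(K')$ is $\{u\}$, $\{v\}$, or $\{u, v\}$, so $V(T_K) \cap V(T_{K'}) \subseteq \{u, v\}$, and in particular $L_1, L_2$ are internally vertex-disjoint.

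In Case 2, $e$ is an interior edge of a unique territory $T_K$, and I would build $L_1, L_2$ inside $T_K$ itself. The construction supplies, for each relevant edge type, two distinct induced $t$-cycles through $e$ sharing only $e$ and its endpoints: for an edge of $B_j$ with $0 \le j < m$ one $t$-cycle runs outward using the subdivided cycle $B_{j+1}$ and another runs inward using $B_{j-1}$ (or is simply $B_0$ itself when $j=0$); for a connecting edge $x_i x'_i$ the two $t$-cycles come from the two neighbors of $x'_i$ in the abstract cycle defining $B_j$; and the expansion edges $x_i y_i$ and the middle edges $v^\pm_i y_i$ of $P_i$ for $i \in I$ lie in the two $t$-cycles built from the two halves of $P_i$ (for $x_i y_i$) or from $P_i$ and $Q_i$ (for the middle edges); finally, an edge of $B_m$ incident to a stable-set vertex $x_j$ lies in the inward $t$-cycle inherited from $T_m$ and in the $t$-cycle supplied by the expansion at $x_j$ (the ``mini $t$-cycle'' $x^-_j\dd y^-_j\dd \cdots \dd v^-_j\dd y_j\dd x_j\dd x^-_j$, or its mirror). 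In each instance $L_1 \cup L_2 \subseteq T_K$, and since $V(T_K) \setminus V(K)$ is anticomplete in $G_t$ to everything outside $V(K)$, the chord analysis localises to $T_K$ and to $\Gamma$-edges at boundary vertices of $L_1 \cup L_2$.

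The main obstacle throughout is verifying that $L_1 \cup L_2$ is induced in $G_t - e$. Chords inside a single territory are ruled out by induced-ness of the chosen $t$-cycles; chords between the interiors of two distinct territories are ruled out by the anticompleteness of their interiors. The remaining possibility is a chord $xy \in E(\Gamma)$ with $x$ and $y$ both boundary vertices of the $(2t-2)$-cycle. Theorem~\ref{thm:expansiondistance} gives $\dist_K(x,u) \leq t^{(t-1)+3t} = t^{4t-1}$ whenever $x \in V(L_1) \cap V(K)$, and analogously for $L_2$ and $K'$. Then $xy$ together with short subpaths of $K$ and $K'$ (and, in the cross-$L_1/L_2$ situation, the edge $e$) forms a cycle in $\Gamma$ of length at most $4t^{4t-1} + O(1) < t^{5t} = g$, contradicting the girth of $\Gamma$; Lemma~\ref{lem:twocycles} supplies the corresponding control when the would-be chord passes through a third cycle of $\mca{K}$. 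I expect the most delicate part to be the case-by-case enumeration in Case 2 and, in Case 1, choosing $D_K$ and $D_{K'}$ carefully enough that the boundary vertices appearing in $V(L_1) \cup V(L_2)$ are close to $u$ or $v$ in the respective territory metric, so that the girth bound from Theorem~\ref{thm:expansiondistance} actually applies.
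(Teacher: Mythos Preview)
Your overall strategy is exactly the paper's: split on whether $e\in E(\Gamma)$, in the $\Gamma$-edge case glue two induced $t$-cycles coming from the two territories whose boundaries contain $e$, and in the interior case work inside a single $T_K$ and then transfer induced-ness to $G_t$ via a girth/distance argument. Two points deserve correction.

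\textbf{Case 1 contains a genuine error.} You write that by Lemma~\ref{lem:twocycles}, $V(K)\cap V(K')\subseteq\{u,v\}$ and hence $V(T_K)\cap V(T_{K'})\subseteq\{u,v\}$. This is false: Lemma~\ref{lem:twocycles} applies only to paths of length less than $t^{5t-1}$, not to the full cycles $K,K'$, and in fact two cycles $K\in\mca K_1$, $K'\in\mca K_2$ can share arbitrarily many vertices (every vertex of $\Gamma$ lies on one cycle from each $\mca K_i$). What you need instead is a \emph{local} statement: the boundary vertices of $H_i$ all lie on a short subpath of $K_i$. The paper does exactly this via Lemma~\ref{lem:multidistance} (a direct corollary of Theorem~\ref{thm:expansiondistance}), producing paths $L_i$ in $K_i$ of length at most $t^{5t-2}$ containing $A_i=V(H_i)\cap V(K_i)$, and \emph{then} applies Lemma~\ref{lem:twocycles} to $L_1,L_2$. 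Your later ``chord analysis'' paragraph has the right ingredients for this repair, but the internal-disjointness of $L_1,L_2$ must be obtained this way, not from the incorrect global claim.

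\textbf{Case 2 is correct in spirit but needlessly laborious.} Your enumeration misses several edge types (e.g.\ $x_i^\pm y_i^\pm$, interior edges of $P_i$ away from $y_i$, edges of $Q_i$, $v_i^\pm z_i^\pm$), and ``induced-ness of the chosen $t$-cycles'' does not by itself rule out chords between the two $t$-cycles inside $T_K$. The paper sidesteps all of this by packaging the territory-internal statement as Observation~\ref{obs:expansionbasic}\ref{obs:expansionbasic_a}: for every $e\in E(T)\setminus E(B)$ there is an induced $(2t-2)$-cycle in $T-e$. The transfer to $G_t$ is then a single application of Lemma~\ref{lem:isgpreserved} (any connected induced subgraph of $T_K$ of diameter at most $t-1$ is induced in $G_t$), which is precisely the girth/distance argument you sketch.
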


We will prove Theorems~\ref{thm:mainevenfree} and \ref{thm:mainevencrritical} in the next two sections.

\section{Proof of $C_{2t-2}$-freeness}\label{sec:c2tfree}

We need to prepare for the proof of Theorem~\ref{thm:mainevenfree} with a few observations (see Figure~\ref{fig:obsdetour} for Observation~\ref{obs:expansiondetour2}).

\begin{observation}\label{obs:expansiondetour1}
    Let $(T, B)$ be a canonical territory. Let $x,y\in V(B)$ be distinct, each with at least one neighbor in $V(T)\setminus V(B)$. Then $\dist_B(x,y)\geq t-3$.
\end{observation}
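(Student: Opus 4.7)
The plan is to unpack the expansion defining $(T,B)$, identify the vertices of $B$ that can have a neighbor in $V(T)\setminus V(B)$, and then bound pairwise $B$-distances between such vertices by a short case analysis.

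Write $(T, B)$ as (an isomorphic copy of) an expansion of some $(T_m, B_m)$, with stable set $\{x_1, \ldots, x_k\} \subseteq V(B_m)$, index set $I \subseteq \poi_k$, and the paths $P_i, Q_i$ (and vertices $y_i, y_i^\pm, v_i^\pm, z_i^\pm$) introduced in Section~\ref{sec:prelim}. Every vertex of $B$ lies in $V(B_m) \setminus \{x_1,\ldots, x_k\}$ or in the interior of a unique $R_i$. For $u \in V(B)\cap V(B_m)$, the only $T$-edges at $u$ not already present in $T_m$ are the possible edges $u y_i^\pm$ in the case $u = x_i^\pm$, and those have their other endpoint in $V(R_i) \subseteq V(B)$; hence $u$ has a neighbor in $V(T) \setminus V(B)$ if and only if $u$ already had one in $V(T_m) \setminus V(B_m)$. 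Unrolling the recursive definition of $(T_m, B_m)$ identifies these ``inner-connected'' vertices as exactly the backbone vertices $x_1', \ldots, x_{t(t-3)^{m-1}}'$ of $B_m$ when $m \geq 1$ (and none when $m = 0$), and consecutive backbones sit at $B_m$-distance $t-3$. For $u$ in the interior of some $R_i$, a direct inspection of the construction reveals that the only interior vertices with a $T$-neighbor outside $V(R_i)$ are $y_i$ (whose extra neighbor is $x_i$) when $i \notin I$, and $v_i^-$ and $v_i^+$ (each with extra neighbor $y_i$) when $i \in I$.

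The relevant distances inside a single $R_i$, read off from the construction, are $\dist_{R_i}(y_i, x_i^\pm) = t-2$ when $i \notin I$, and $\dist_{R_i}(v_i^\pm, x_i^\pm) = t-3$ with $\dist_{R_i}(v_i^-, v_i^+) = t-2$ when $i \in I$. Given distinct externally-connected $x, y \in V(B)$, I would bound $\dist_B(x,y)$ by cases. If both lie in $V(B_m)$, then since every $R_i$ has length at least $2t-4 > 2$, the length of the $B_m$-subpath through $x_i$ that it replaces, any $B$-path from $x$ to $y$ contracts (by mapping each full traversal of an $R_i$ back to the two-edge path through $x_i$) to a walk of no greater length in $B_m$, so $\dist_B(x,y) \geq \dist_{B_m}(x,y) \geq t-3$. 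If both lie in interiors of $R$-paths and in the same $R_i$, then $i \in I$ and $\{x,y\} = \{v_i^-, v_i^+\}$, at distance $t-2$; if in different $R_i, R_j$, the $B$-path must exit $R_i$ and eventually enter $R_j$, contributing at least $(t-3) + (t-3)$. Finally, if exactly one of $x, y$ lies in some $R_i$-interior, the $B$-path from it to $\{x_i^-, x_i^+\}$ already uses at least $t-3$ edges, and the remaining portion (reaching the vertex of $V(B_m)$) has non-negative length. In every case $\dist_B(x,y) \geq t-3$.

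The main obstacle is not conceptual but careful bookkeeping: precisely characterizing the ``extra'' edges introduced by the expansion at vertices of $R_i$, and tracking the internal distances in $R_i$ (including the fact, used implicitly above, that $y_i \notin V(R_i)$ when $i \in I$ so that $v_i^\pm$ really do have neighbors outside $V(B)$). Once the structural identification in the second paragraph is pinned down, the case analysis of the third paragraph is essentially immediate, and the bound is tight (achieved, for instance, by taking $y = x_i^-$ a backbone vertex and $x = v_i^-$ with $i \in I$).
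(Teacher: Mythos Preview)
Your structural bookkeeping has a genuine gap. In the sentence ``hence $u$ has a neighbor in $V(T)\setminus V(B)$ if and only if $u$ already had one in $V(T_m)\setminus V(B_m)$,'' you implicitly use $V(B_m)\subseteq V(B)$, which is false: the vertices $x_1,\ldots,x_k$ of the chosen stable set lie in $V(B_m)\subseteq V(T)$ but are \emph{removed} from $B$. Consequently each $x_i^\pm$ has the neighbor $x_i\in V(T)\setminus V(B)$, whether or not $x_i^\pm$ happens to be a backbone vertex of $B_m$. So every $x_i^\pm$ must be added to your list of ``inner-connected'' vertices of $V(B)\cap V(B_m)$, and your subsequent case analysis does not cover them.

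Worse, once they are added the bound $t-3$ no longer holds. Take $m\geq 1$, let the stable set consist of a single backbone vertex $x_1=x'_j$, and set $I=\varnothing$. Then $x_1^+\in V(B)$ has neighbor $x_1\in V(T)\setminus V(B)$; the adjacent backbone vertex $x'_{j+1}$ is still in $V(B)$ and has its neighbor in $V(B_{m-1})\subseteq V(T)\setminus V(B)$; and the $B_m$-path from $x_1^+$ to $x'_{j+1}$ avoiding $x_1$ survives intact in $B$, giving $\dist_B(x_1^+,x'_{j+1})=t-4$. (For $t=5$ this is distance $1$.) So the statement, read literally, fails for nontrivial expansions; the paper records it as an ``Observation'' without proof, and its only use is inside the proof of Theorem~\ref{thm:mainevenfree}. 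Your argument does correctly establish the special case $(T,B)=(T_m,B_m)$, and your analysis of the interior vertices of the $R_i$ is right; the error is solely the overlooked neighbor $x_i$ of $x_i^\pm$.
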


\begin{observation}\label{obs:expansiondetour2}
    Let $(T, B)$ be a canonical territory. Let $x,y\in V(B)$ be distinct with a common neighbor $z\in V(T)\setminus V(B)$. Let $L$ be a shortest path in $B$ from $x$ to $y$. Then the following hold.
    \begin{enumerate}[{\rm (a)}]
    \item\label{obs:expansiondetour2_a} $L$ has length at least $t-2$.
    \item\label{obs:expansiondetour2_b} Let $x',y'\in V(L)$. Assume that there is a path $P'$ of length at least two in $T$ from $x'$ to $y'$ such that $V(P')\setminus \{x',y'\}\subseteq V(T)\setminus V(B)$ and $z$ is anticomplete to $V(P')\setminus \{x',y'\}$. Then $\{x',y'\}=\{x,y\}$.
    \end{enumerate}
\end{observation}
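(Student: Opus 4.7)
The approach is a direct structural analysis based on the explicit expansion construction. Write $(T,B)$ as an expansion of some $(T_m,B_m)$ with stable set $\{x_i:i\in\poi_k\}\subseteq V(B_m)$, subset $I\subseteq\poi_k$, and added paths $P_i,Q_i$ as in Section~\ref{sec:prelim}.

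The first step is to classify all vertices $z\in V(T)\setminus V(B)$ having two or more neighbors in $V(B)$. Decomposing $V(T)\setminus V(B)=(V(T_m)\setminus V(B_m))\cup\{x_i:i\in\poi_k\}\cup\{y_i:i\in I\}$ and using that, in the recursive construction of $(T_m,B_m)$, every vertex of $V(T_m)\setminus V(B_m)=V(T_{m-1})$ has at most one neighbor in $V(B_m)$ (the only edges between those two sets are the subdivision edges joining each $u_\ell \in V(B_{m-1})$ to its copy $u_\ell'\in V(B_m)$), I would show that such a $z$ is either $z=x_i$ for some $i\in\poi_k$ (with neighbors in $V(B)$ being $\{x_i^-,x_i^+\}$, plus $y_i$ if $i\notin I$) or $z=y_i$ for some $i\in I$ (with neighbors $\{v_i^-,v_i^+\}$ in $V(B)$).

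For part (a), one directly computes the length of the shortest path $L$ in $B$ in each of these cases: since the perimeter of $B$ is at least $g\gg 2t$, $L$ is always the local path inside $R_i$ or its $Q_i$-portion, of length $3t-10$, $2t-6$, or $t-2$ depending on the case. Each of these is at least $t-2$, so part (a) follows.

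For part (b), the central observation is that among the vertices of $V(L)$, only the endpoints $x$ and $y$ have a neighbor in $V(T)\setminus V(B)$ that survives the anticomplete-to-$z$ condition. Every other vertex of $V(L)$ is either a degree-two interior vertex of an added path $P_i$ or $Q_i$ (both of whose neighbors lie in $V(L)\subseteq V(B)$), or, in the sub-case $z=x_i$, $i\notin I$, $\{x,y\}=\{x_i^-,x_i^+\}$, is the vertex $y_i\in V(B)$ whose only neighbor in $V(T)\setminus V(B)$ is $z=x_i$ itself. Since $P'$ has length at least two, $x'$ must have a neighbor in $V(T)\setminus V(B)$, hence $x'\in\{x,y\}$; likewise $y'\in\{x,y\}$, and since $x'\neq y'$ we conclude $\{x',y'\}=\{x,y\}$. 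In sub-cases where only one endpoint of $L$ has a valid external neighbor (e.g.\ $\{x,y\}=\{x_i^\pm,y_i\}$ for $i\notin I$, or $\{x,y\}=\{v_i^-,v_i^+\}$ for $i\in I$), no valid $P'$ exists at all and the implication is vacuously true. The main obstacle is the meticulous bookkeeping in the classification and case analysis, especially handling the distinction between $i\in I$ and $i\notin I$ and tracking the degrees and adjacencies of the vertices $x_i^\pm, y_i^\pm, v_i^\pm, z_i^\pm, y_i$ created by the expansion.
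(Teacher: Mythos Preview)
The paper does not prove this observation; it is stated with reference to Figure~\ref{fig:obsdetour}, which depicts the possible configurations of $x,y,z,L$ in one particular canonical territory. Your structural case analysis---classifying the vertices $z\in V(T)\setminus V(B)$ with at least two neighbours in $V(B)$ and then reading off $L$ in each case---is exactly the argument the figure encodes, and your classification is correct. (Minor arithmetic: the local arc from $x_i^-$ to $x_i^+$ along $R_i$ has length $2t-4$ or $3t-8$, not $2t-6$ or $3t-10$; the latter are the perimeter \emph{increments}.)

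There is, however, one genuine gap. You assert that ``the perimeter of $B$ is at least $g$'' and use this to force $L$ to be the local arc inside $R_i$. But a canonical territory is by definition \emph{any} expansion of \emph{any} $(T_m,B_m)$, and its perimeter can be as small as $3t-6$. For part~(a) this is harmless, since a direct length comparison shows the complementary arc of $B$ also has length at least $t-2$ in every case. For part~(b) it is not harmless: if $L$ happens to be the arc through some other $R_j$, then interior vertices of $L$ such as $x_j^{\pm}$ or $y_j$ can have neighbours in $V(T)\setminus V(B)$ that are anticomplete to $z$, and your key claim fails. Indeed, part~(b) as literally stated admits counterexamples: for $t=5$, $m=0$, $k=2$ with $x_1,x_2$ at distance~$2$ in $B_0$ and $I=\{1\}$, both arcs of $B$ between $x_1^-$ and $x_1^+$ have length~$7$, and taking $L$ to be the arc through $R_2$, the path $P'=x_2^{+}\dd x_2\dd y_2$ satisfies every hypothesis of~(b) with $\{x',y'\}\neq\{x,y\}$. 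The paper's only use of this observation (inside \eqref{st:longcycle}) is for territories $(T_K,K)$ of perimeter at least $g=t^{5t}$, where your assumption \emph{is} valid and $L$ is uniquely the local arc; so your argument establishes what the paper actually needs, but not the observation in the generality in which it is stated.
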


\begin{figure}[t!]
    \centering
    \includegraphics[scale=0.6]{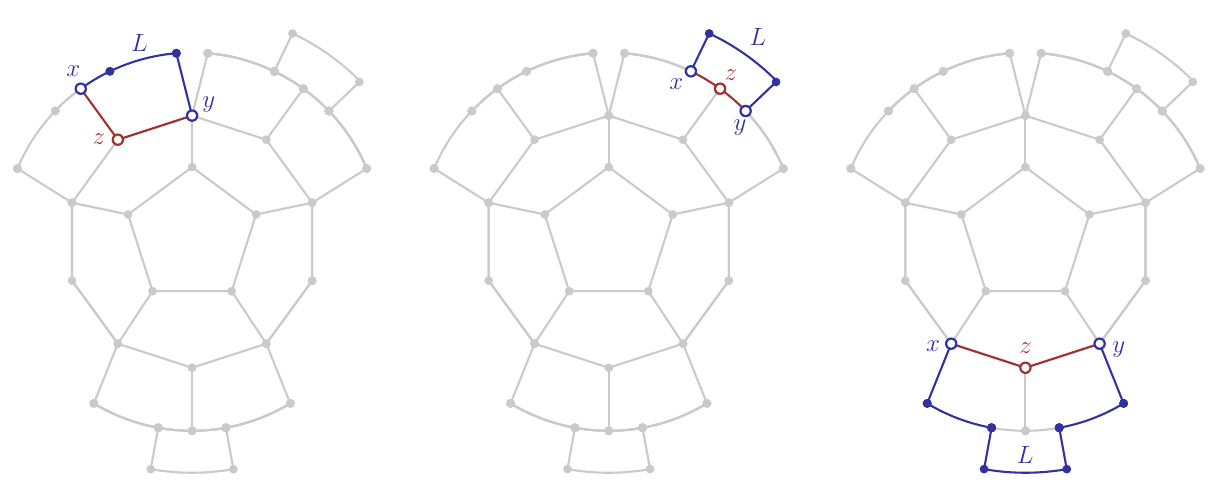}
    \caption{The possibilities for $x,y,z$ and $L$ as in Observation~\ref{obs:expansiondetour2} where $(T,B)$ is the canonical territory from Figure~\ref{fig:expansion}.}
    \label{fig:obsdetour}
\end{figure}

We also need a definition. Given a graph $W$, by an \textit{excursion in $W$} we mean, for some $k\in \poi$, an ordered $k$-tuple $(P_1,\ldots, P_k)$ of paths in $W$ with the following specifications:
\begin{itemize}
    \item There are $k+1$ vertices $x_1,\ldots, x_{k+1}\in V(W)$ such that for every $i\in \poi_k$, the ends of $P_i$ are $x_i$ and $x_{i+1}$.
    \item We have $E(W)= E(P_1)\cup \cdots\cup E(P_k)$.
\end{itemize}

We say that the excursion $(P_1,\ldots, P_k)$ is \textit{closed} if $x_1=x_{k+1}$. Observe that trees are exactly the connected graphs in which every closed excursion goes through each edge at least twice:

\begin{observation}\label{obs:walk}
    Let $W$ be a connected graph. Then there is a closed excursion $(P_1,\ldots, P_k)$ in $W$ such that some edge of $W$ belongs to exactly one of $E(P_1),\ldots, E(P_k)$, if and only if $W$ contains a cycle.
\end{observation}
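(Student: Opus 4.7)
The plan is to prove the two directions of the equivalence separately: a parity argument for the implication that a closed excursion using some edge exactly once forces the existence of a cycle, and an explicit construction for the converse.

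For the forward direction I would argue the contrapositive. Assume $W$ is a tree and let $(P_1,\ldots, P_k)$ be any closed excursion in $W$. Concatenating $P_1,\ldots, P_k$ in order yields a closed walk in $W$, and since each $P_i$ is a (simple) path, every edge $e\in E(W)$ is traversed by this walk in exactly $|\{i : e\in E(P_i)\}|$ steps. Because $W$ is a tree, deleting $e$ separates $W$ into two components, so any closed walk must cross back and forth between them an equal number of times---each crossing being a traversal of the bridge $e$. Hence $|\{i : e\in E(P_i)\}|$ is even. Since the equality $E(P_1)\cup\cdots\cup E(P_k)=E(W)$ forces every edge to appear in at least one $E(P_i)$, the count is in fact at least two, so no edge of $W$ is used exactly once.

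For the reverse direction, assume $W$ contains a cycle and fix a spanning tree $T$ of $W$; then $W$ admits at least one non-tree edge $e=uv$. For each $x\in V(W)$, let $[u,x]$ denote the unique path in $T$ from $u$ to $x$. Enumerate $E(W)\setminus\{e\}=\{f_1,\ldots, f_m\}$ with $f_j=a_jb_j$, and consider the sequence of paths
\[
\bigl(\,[u,a_1],\ f_1,\ [b_1,u],\ [u,a_2],\ f_2,\ [b_2,u],\ \ldots,\ [u,a_m],\ f_m,\ [b_m,u],\ [u,v],\ e\,\bigr),
\]
where each $f_j$ and $e$ is viewed as a one-edge path. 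Every block $([u,a_j],\,f_j,\,[b_j,u])$ starts and ends at $u$, and the final block $([u,v],\,e)$ also starts and ends at $u$, so consecutive paths share endpoints and the whole sequence is a closed excursion based at $u$. The union of its edge sets covers $E(W)$ because each $f_j$ is its own middle-path entry and $e$ appears in the final path. Finally, $e\notin T$ implies that $e$ does not lie in any tree path $[u,x]$, and $e$ differs from every $f_j$ by construction, so $e$ belongs to exactly one of the $E(P_i)$, as required.

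Both directions are quite short; the main point of care is the reverse-direction construction, where one must verify that the sequence chains into a legitimate excursion and that the union of the path edge sets really is all of $E(W)$---both of which are direct checks given the explicit formula above.
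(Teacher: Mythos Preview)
Your proof is correct. The paper itself states Observation~\ref{obs:walk} without proof, so there is nothing to compare against; your argument supplies exactly the kind of short justification the paper leaves to the reader. The forward direction via the bridge--parity argument on the concatenated closed walk is the natural one, and your reverse-direction construction is clean and checks out: the chaining of endpoints is valid (length-zero tree paths such as $[u,a_j]$ when $a_j=u$ are permitted by the paper's conventions), the union of edge sets is all of $E(W)$ since every edge of $W\setminus\{e\}$ occurs as some $f_j$, and the non-tree edge $e$ lies in none of the tree paths $[u,\,\cdot\,]$ and equals no $f_j$, so it is used exactly once.
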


We are now in a position to prove Theorem~\ref{thm:mainevenfree}, which we restate:

\mainevenfree*

\begin{proof}

Suppose for a contradiction that there is an induced $(2t-2)$-cycle $C$ in $G_t$. Roughly, the strategy is to ``project'' each piece of $C$ that is in a territory onto the boundary of that territory, and then obtain a short cycle in $\Gamma$ (which would then yield a contradiction because $\Gamma$ has large girth). To make this precise, we need several definitions. For every $K\in \mca{K}$, let $E_K=E(C)\cap (E(T_K)\setminus E(K))$; then $(E_K:K\in \mca{K})$ is a partition of $E(C)\setminus E(\Gamma)$. By a \textit{$K$-sector of $C$} we mean a path $P$ of length at least two in $C$ such that the ends of $P$ are contained $V(K)$ and the interior of $P$ is contained $V(T_K)\setminus V(K)$; in particular, we have $E(P)\subseteq E_K\subseteq E(T_K)\setminus E(K)$. Since $C$ is an induced cycle, it follows that the $K$-sectors of $C$ are paths of length at least two in $T_K$ with pairwise anticomplete interiors. Let $\mca{P}_K$ be the set of all $K$-sectors of $C$. Then $(E(P):P\in \mca{P}_K)$ is a partition of $E_K$. 

Let $$\mca{P}=\bigcup_{K\in \mca{K}}\mca{P}_K.$$
It follows that $(E(P):P\in \mca{P})$ is a partition of $E(C)\setminus E(\Gamma)$.  For every $K\in \mca{K}$ and every $P\in \mca{P}_K$, fix a shortest path $L_P$ in $K$ between the ends of $P$. Let
$$F=\bigcup_{P\in \mca{P}}E(L_P).$$
Also, let $D$ be the set of all edges (in $F$) that belong to exactly one of $(E(L_P):P\in \mca{P})$.

We claim that:

\sta{\label{st:shortpath} For every $P\in \mca{P}$, the path $L_P$ has length at most $t^{5t-2}$.}

Let $K\in \mca{K}$ such that $P\in \mca{P}_K$ and let $x,y$ be the ends of $P$. Then $P$ is a path in $T_K$ whose ends belong to $V(K)$. Since $P$ is also a path in $C$ and $C$ is a $(2t-2)$-cycle, it follows that $\dist_{T_K}(x,y)\leq 2t-2$. Since $(T_K,K)$ is canonical, it follows from Theorem~\ref{thm:expansiondistance} that $\dist_{K}(x,y)\leq t^{5t-2}$. This proves \eqref{st:shortpath}.

\sta{\label{st:containment} We have $D\subseteq E(C)\cap E(\Gamma)\subseteq F$.}

Let $C'$ be the subgraph of $C$ with edge set $E(C)\cap E(\Gamma)$ and with no isolated vertices. Let $\mca{Q}$ be the set of all components of $C'$. Then every $Q\in \mca{Q}$ is an induced path of non-zero length in $C$ (and so in $\Gamma$, because both $C$ and $\Gamma$ are induced subgraphs of $G_t$), and $(E(Q):Q\in \mca{Q})$ is a partition of $E(C)\cap E(\Gamma)$. Moreover, observe that $\mca{P}\cup \mca{Q}$ is a set of pairwise internally disjoint paths of non-zero length in $G_t$ with $C=\bigcup_{R\in \mca{P}\cup \mca{Q}}R$. In other words, writing $k=|\mca{P}\cup \mca{Q}|$, it follows that $k<2t$ and there is an enumeration $R_1,\ldots, R_k$ of the elements of $\mca{P}\cup \mca{Q}$ such that $(R_1,\ldots, R_k)$ is a closed excursion in $C$ (where every edge of $C$ belongs to exactly one of $E(R_1),\ldots, E(R_k)$). For every $i\in \poi_k$, define $\tilde{R}_i$ as follows: if $R_i\in \mca{P}$, then let $\tilde{R}_i=L_{R_i}$, and if $R_i\in \mca{Q}$, then let $\tilde{R}_i=R_i$ (this is well-defined because $\mca{P}\cap\mca{Q}=\varnothing$). Note that $\tilde{R}_i$ has the same ends as $R_i$. Also, $\tilde{R}_i$ has length at most $t^{5t-2}$ (for $R_i\in \mca{P}$, this follows from \eqref{st:shortpath}, and for $R_i\in \mca{Q}$, this follows from the fact that $\tilde{R}_i=R_i$ is a path in $C$ and $C$ is a cycle of length $2t-2<t^{5t-2}$). Let $W=\bigcup_{i=1}^k\tilde{R}_i$. Then $W$ is a connected subgraph of $\Gamma$ with $|V(W)|<2t\times t^{5t-2}<t^{5t}$, and $(\tilde{R}_1,\ldots, \tilde{R}_k)$ is a closed excursion in $W$. Now, assume that there is an edge $e$ which belongs to either $D\setminus (E(C)\cap E(\Gamma))$ or $(E(C)\cap E(\Gamma))\setminus F$. In the former case, since $e\in D$, it follows that $e$ belongs to exactly one of $(E(L_P):P\in \mca{P})$, and since $e\notin E(C)\cap E(\Gamma)$, it follows that $e$ belongs to none of $(E(Q):Q\in \mca{Q})$. In the latter case, since $e\in E(C)\cap E(\Gamma)$, it follows that $e$ belongs to exactly one of $(E(Q):Q\in \mca{Q})$, and since $e\notin F$, it follows that $e$ belongs to none of $(E(L_P):P\in \mca{P})$. Therefore, in either case, $e$ belongs to exactly one of $E(\tilde{R}_1),\ldots, E(\tilde{R}_k)$. But then by Observation~\ref{obs:walk}, there is a cycle in $W$ (and so in $\Gamma$) of length at most $|V(W)|<t^{5t}$, a contradiction to the choice of $\Gamma$ with girth at least $g=
t^{5t}$. This proves \eqref{st:containment}.

\sta{\label{st:laminar} The following hold.
\begin{itemize}
\item Let $K\in \mca{K}$ and let $P,P'\in \mca{P}_K$. Then either $E(L_P)\cap E(L_{P}')=\varnothing$ or one of $E(L_P)$ and $E(L_{P'})$ is a subset of the other.
\item Let $K, K'\in \mca{K}$ be distinct, let $P\in \mca{P}_K$ and let $P'\in \mca{P}_{K'}$. Then $|E(L_P)\cap E(L_{P'})|\leq 1$.
\end{itemize}}

Note that since $(T_K, K)$ is canonical, it follows that $T_K$ admits a planar drawing in which $K$ is the boundary of the outer face. This, combined with the fact that the $K$-sectors of $C$ are pairwise internally disjoint, implies the first assertion of \eqref{st:laminar}. For the second assertion, note that by \eqref{st:shortpath}, both $L_P$ and $L_{P'}$ have length at most $t^{5t-2}<t^{5t-1}$. Therefore, by Lemma~\ref{lem:twocycles}, we have $|E(L_P)\cap E(L_{P'})|\leq 1$. This proves \eqref{st:laminar}.
\medskip

For every $K\in \mca{K}$, let $\hat{\mca{P}}_K$ be the set of all $P\in \mca{P}_K$ for which $E(L_P)$ is maximal with respect to inclusion. In other words, $\hat{\mca{P}}_K$ is the set of all $P\in \mca{P}_K$ for which there is no $P'\in \mca{P}_K\setminus \{P\}$ with $E(L_P)\subsetneq E(L_{P'})$. It follows that $\hat{\mca{P}}_K\neq \varnothing$ if and only if $\mca{P}_K\neq \varnothing$ if and only if $E_K\neq \varnothing$.

For every $K\in \mca{K}$ and every $P\in \mca{P}_K$, let
$$D_P=E(L_P)\setminus \left(\bigcup_{P'\in \mca{P}_K\setminus \{P\}}E(L_{P'})\right).$$
Let 
$$\hat{\mca{P}}=\bigcup_{K\in \mca{K}}\hat{\mca{P}}_K.$$
We claim that:

\sta{\label{st:hatproperties} Let $P\in \mca{P}$. Then $D_P\neq \varnothing$ if and only if $P\in \hat{\mca{P}}$.}

The ``only if'' implication is immediate from the definition of $\hat{\mca{P}}_K$. For the ``if'' implication, suppose for a contradiction that for some $K\in \mca{K}$, there exists $P_0\in \hat{\mca{P}}_K$ with $D_{P_0}=\varnothing$. Let $\mca{P}_1=\{P\in \mca{P}_K\setminus \{P_0\}: E(L_{P})\subseteq E(L_{P_0})\}$ and $\mca{P}_2=\{P\in \mca{P}_K\setminus \{P_0\}:E(L_{P_0})\cap E(L_{P})= \varnothing\}$. Since $P_0\in \hat{\mca{P}}_K\subseteq \mca{P}_K$, it follows that $(\mca{P}_1,\mca{P}_2)$ is a partition of $\mca{P}_K\setminus \{P_0\}$. Let $\mca{M}$ be the set of all $P\in \mca{P}_1$ for which there is no $P'\in \mca{P}_1\setminus \{P\}$ with $E(L_P)\subsetneq E(L_{P'})$. Since $P_0\in \hat{\mca{P}}_K$ and $D_{P_0}=\varnothing$, it follows from the first bullet of \eqref{st:laminar} that $(E(L_P):P\in \mca{M})$ is a partition of $E(L_{P_0})$ (and so $\mca{M}\neq \varnothing$). We deduce that $H=P_0\cup (\bigcup_{P\in \mca{M}}P)$ is a cycle in $G_t$ with $E(H)\subseteq E_K\subseteq E(C)\cap E(T_K)$. Since $E(H)\subseteq E(C)$ and $C$ is an induced $(2t-2)$-cycle in $G_t$, it follows that $H=C$. But now since $E(H)\subseteq E(T_K)$, it follows $H$ is an induced $(2t-2)$-cycle in $T_K$, contrary to Theorem~\ref{thm:expansionevenfree} as $(T_K, K)$ is canonical. This proves \eqref{st:hatproperties}.

\sta{\label{st:longcycle} The following hold.
\begin{itemize}
    \item For every $K\in \mca{K}$, the sets $(E(L_P): P\in \hat{\mca{P}}_K)$ are pairwise disjoint. 
\item For every $P\in \hat{\mca{P}}$, we have $|E(P)|+|D_P|\geq t$.
\end{itemize}}

Suppose that for some $K\in \mca{K}$, there are distinct $P,P'\in \hat{\mca{P}}_K$ for which $E(L_P)\cap E(L_{P'})\neq \varnothing$. By the first bullet of \eqref{st:laminar}, we have $E(L_P)=E(L_{P'})$. But then $D_P=D_{P'}=\varnothing$, a contrary to \eqref{st:hatproperties}. This proves the first assertion of \eqref{st:longcycle}. For the second assertion, let $P\in \hat{\mca{P}}_K$ for some $K\in \mca{K}$ and let $x,y$ be the ends of $P$; thus, $x,y$ are the ends of $L_P$, as well. By \eqref{st:hatproperties}, we have $D_P\neq \varnothing$. Choose $e\in D_P\subseteq E(L_P)$. Assume that $P$ has length at least three. By Observation~\ref{obs:expansiondetour1}, there is a path $L'$ of length $t-3$ in $L_P$ such that $e\in E(L')$ and every vertex in the interior of $L'$ has degree two in $T_K$. Since $e\in D_P$, it follows that $E(L')\subseteq D_P$. But then $|E(P)|+|D_P|\geq 3+(t-3)=t$, as desired. Now, assume that $P$ has length two; say $P=x\dd z\dd y$ where $z\in V(T_K)\setminus V(K)$. Apply Observations~\ref{obs:expansiondetour2} to $x,y,z$ and $L_P$. By \ref{obs:expansiondetour2}\ref{obs:expansiondetour2_a}, we have $|E(L_P)|\geq t-2$. We further claim that $D_P=E(L_P)$. Suppose not. Then, since $P\in \hat{\mca{P}}_K$, it follows that there exists $P'\in \mca{P}_K\setminus \{P\}$ with $E(L_{P'})\subseteq E(L_P)$. Let $x',y'\in V(K)$ be the ends of $P'$; thus, $x',y'\in V(L_P)$. Since $P,P'\in \mca{P}_K$ are distinct, it follows that $z$ is anticomplete to $P'\setminus \{x',y'\}$. Therefore, by \ref{obs:expansiondetour2}\ref{obs:expansiondetour2_b}, we have $\{x,y\}=\{x',y'\}$ and so $L_P=L_{P'}$. But then $e\in E(L_P)=E(L_P)\cap E(L_{P'})$, contrary to the choice of $e\in D_P$. The claim follows, which in turn implies that $|E(P)|+|D_P|=|E(P)|+|E(L_P)|\geq 2+(t-2)=t$. This proves \eqref{st:longcycle}.
\medskip

Define $\sigma,\rho,\kappa\in \poi\cup \{0\}$ as follows:
\begin{itemize}
    \item let $\sigma=|\hat{\mca{P}}|=\sum_{K\in \mca{K}}|\hat{\mca{P}}_K|$;
    \item let $\rho$ be the number of all $2$-subsets $\{P,P'\}$ of $\hat{\mca{P}}$ for which $D_P\cap D_{P'}\neq \varnothing$; and
    \item let $\kappa=|\{K\in \mca{K}:|\hat{\mca{P}}_K|\geq 2\}|$. 
\end{itemize}

We claim that:

\sta{\label{st:ineq2} $\displaystyle \kappa\leq \binom{\sigma}{2}-\rho$.}

By the first bullet of \eqref{st:longcycle}, the number of $2$-subsets $\{P,P'\}$ of $\hat{\mca{P}}$ with $D_P\cap D_{P'}=\varnothing$ is at least $\kappa$. It follows that $\rho\leq \binom{\sigma}{2}-\kappa$. This proves \eqref{st:ineq2}.

\sta{\label{st:ineq1} $2\rho\geq t\sigma-|E(C)\setminus E(\Gamma)|-|D|$.}

Let $U=\bigcup_{P\in \hat{\mca{P}}}D_P$. By Observation~\ref{obs:cycles} and the first bullet of \eqref{st:longcycle}, every edge in $U$ belongs to one or two of $(D_P: P\in \hat{\mca{P}})$. Let $U_1$ be the set of all edges that belong to at exactly one of $(D_P: P\in \hat{\mca{P}})$ and let $U_2$ be the set of all edges that belong to exactly two of $(D_P: P\in \hat{\mca{P}})$. Then  $\sum_{P\in \hat{\mca{P}}}|D_P|=|U_1|+2|U_2|$. By \eqref{st:hatproperties}, we have $U_1=D$ (recall that $D$ is the set of all edges that belong exactly one of $(E(L_P):P\in \mca{P})$). Also, by the first bullet of \eqref{st:longcycle} and the second bullet of \eqref{st:laminar}, for every $2$-subset $\{P,P'\}$ of $\hat{\mca{P}}$, we have $|D_P\cap D_{P'}|\leq 1$. It follows that $|U_2|=\rho$. Therefore,
$$\sum_{P\in \hat{\mca{P}}}|D_P|=|D|+2\rho.$$
On the other hand, by the second bullet of \eqref{st:longcycle}, for every $P\in \hat{\mca{P}}$, we have $|D_P|\geq t-|E(P)|$. Hence, 
$$\sum_{P\in \hat{\mca{P}}}|D_P|\geq t\sigma-\sum_{P\in \hat{\mca{P}}}|E(P)|\geq t\sigma-\sum_{P\in \mca{P}}|E(P)|=t\sigma-|E(C)\setminus E(\Gamma)|.$$
This proves \eqref{st:ineq1}.

\sta{\label{st:sigma} $2\leq \sigma\leq t-1$.}

Since $C$ has length $2t-2$ and $\Gamma$ has girth at least $g>2t-2$, we have $E(C)\setminus E(\Gamma)\neq \varnothing$, and so $E_K\neq \varnothing$ for some $K\in \mca{K}$. It follows that $\hat{\mca{P}}_K\neq \varnothing$, and so $\sigma\geq 1$. Assume that $\sigma=1$. Then there is exactly one $K\in \mca{K}$ such that $\hat{\mca{P}}_K\neq \varnothing$, and so $E_{K'}=\mca{P}_{K'}=\hat{\mca{P}}_{K'}=\varnothing$ for every $K'\in \mca{K}\setminus \{K\}$. It follow that $E(C)\setminus E(\Gamma)=E_K\subseteq E(T_K)$ and $F\subseteq E(K)\subseteq E(T_K)$. The latter combined with \eqref{st:containment} implies that $E(C)\cap E(\Gamma)\subseteq F\subseteq E(T_K)$. Therefore, we have $E(C)\subseteq E(T_K)$. But now $C$ is an induced subgraph of $T_K$, a contradiction to Theorem~\ref{thm:expansionevenfree} as $(T_K, K)$ is canonical. We deduce that $\sigma\geq 2$. Moreover, recall that the elements of $\hat{\mca{P}}\subseteq \mca{P}$ are pairwise internally disjoint paths in $C$, each of length at least two. Since $C$ has length $2t-2$, it follows that $\sigma=|\hat{\mca{P}}|\leq t-1$. This proves \eqref{st:sigma}.
\medskip

Now, we have
\begin{align*}
2\kappa &\leq \sigma^2-\sigma-2\rho\\
        &\leq \sigma^2-(t+1)\sigma+|E(C)\setminus E(\Gamma)|+|D|\\
        &\leq \sigma^2-(t+1)\sigma+|E(C)\setminus E(\Gamma)|+|E(C)\cap E(\Gamma)|\\
        &=\sigma^2-(t+1)\sigma+2t-2;
\end{align*}
where the first inequality follows from \eqref{st:ineq2}, the second inequality follows from \eqref{st:ineq1}, and the third inequality follows from \eqref{st:containment}. On the other hand, note that $\kappa\geq 0$, and by \eqref{st:sigma}, we have $\sigma^2-(t+1)\sigma+2t-2=(\sigma-2)(\sigma-(t-1))\leq 0$. It follows that $2\kappa=\sigma^2-\sigma-2\rho=\sigma^2-(t+1)\sigma+2t-2=0$ and $|D|=|E(C)\cap E(\Gamma)|$. This, along with \eqref{st:containment}, implies that:

$$\kappa=0, \quad \displaystyle \rho=\binom{\sigma}{2}, \quad \sigma\in \{2,t-1\}, \quad 
D=E(C)\cap E(\Gamma).$$

Since $\kappa=0$, it follows that there are $\sigma$ pairwise distinct cycles $K_1,\ldots, K_{\sigma}\in \mca{K}$ for which $|\hat{\mca{P}}_{K_1}|=\cdots =|\hat{\mca{P}}_{K_{\sigma}}|=1$ and $\hat{\mca{P}}=\hat{\mca{P}}_{K_1}\cup \cdots \cup \hat{\mca{P}}_{K_{\sigma}}$; in particular, we have $E_{K}=\mca{P}_{K}=\hat{\mca{P}}_{K}=\varnothing$ for all $K\in \mca{K}\setminus \{K_1,\ldots, K_{\sigma}\}$. For each $i\in \poi_{\sigma}$, let $\hat{\mca{P}}_{K_i}=\{P_i\}$, and write 
$$D_i=D_{P_i}, \quad E_i=E(P_i), \quad F_i=E(L_{P_i}).$$

Assume that $\sigma=2$. Since $\rho=\binom{\sigma}{2}$, by the second bullet of \eqref{st:laminar}, there exists an edge $e\in E(K_1)\cap E(K_2)$ for which $D_1\cap D_2=F_1\cap F_2=\{e\}$. In particular, $D=(D_1\cup D_{2})\setminus \{e\}$, and since $D=E(C)\cap E(\Gamma)$, it follows that $|E(C)\cap E(\Gamma)|=|D|=|D_1|+|D_2|-2$. Also, from the second bullet of \eqref{st:longcycle}, it follows that  $|E_1\cup E_2|=|E_1|+|E_2|\geq 2t-(|D_1|+|D_2|)$. Therefore,
$|E_1\cup E_2|\geq 2t-2-|E(C)\cap E(\Gamma)|=|E(C)\setminus E(\Gamma)|$.
Since $E_1\cup E_2\subseteq E(C)\setminus E(\Gamma)$, it follows that $E(C)\setminus E(\Gamma)=E_1\cup E_2$. In particular, we have $\mca{P}_{K_1}=\hat{\mca{P}}_{K_1}$ and $\mca{P}_{K_2}=\hat{\mca{P}}_{K_2}$, which in turn imply that $D_1=F_1$ and $D_2=F_2$. Consequently, $E(C)\cap E(\Gamma)=(F_1\cup F_2)\setminus \{e\}$, and so $E(C)=E_1\cup E_2\cup ((F_1\cup F_2)\setminus \{e\})$. But now the ends of $e$ belong to $V(C)$ whereas $e\notin E(C)$, contrary to the fact that $C$ is an induced subgraph of $G_t$.

Finally, assume that $\sigma=t-1$. Since $\rho=\binom{\sigma}{2}$, it follows that $D_1,\ldots, D_{t-1}$ are pairwise intersecting, which in turn implies that $E(K_1),\ldots, E(K_{t-1})$ are pairwise intersecting. On the other hand, since $K_1,\ldots, K_{t-1}\in \mca{K}=\mca{K}_1\cup \mca{K}_2\cup \mca{K}_3$ and $t\geq 5$, it follows that there are distinct $i,j\in \poi_{t-1}$ and $k\in \{1,2,3\}$ such that $K_i,K_j\in \mca{K}_k$. But then $E(K_i)\cap E(K_j)=\varnothing$, a contradiction. This completes the proof of Theorem~\ref{thm:mainevenfree}
.
\end{proof}

\section{Proof of edge-criticality}\label{sec:critical}

In this last section, we prove Theorem~\ref{thm:mainevencrritical}, hence completing the proof of Theorem~\ref{thm:maineven}. We will need the following:
\begin{observation}\label{obs:expansionbasic}
    Let $(T, B)$ be a canonical territory. Then the following hold.
    \begin{enumerate}[{\rm (a)}]
     \item\label{obs:expansionbasic_a} For every edge $e\in E(T)\setminus E(B)$, there is an induced $(2t-2)$-cycle in $T-e$.
        \item\label{obs:expansionbasic_b} For every $e\in E(B)$, there is an induced $t$-cycle $H$ in $T$ such that $e\in E(H)$. 
    \end{enumerate}
\end{observation}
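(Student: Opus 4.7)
Both parts are proved by induction on the recursion depth $m$, where $(T,B)$ is an expansion of $(T_m,B_m)$, settling the statements for $(T_m,B_m)$ first and then handling the expansion handles. Throughout, I will use the elementary fact that any induced subgraph of $T_m$ remains an induced subgraph of $T$, because each vertex of $V(T)\setminus V(T_m)$ attaches to $V(T_m)$ only through the expansion stable-set vertices $x_i,x^-_i,x^+_i$.

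For part~(b), if $e\in E(B)\cap E(B_m)$ then $e$ lies on a $(t-4)$-subdivided subpath of $B_m$ between two consecutive pre-subdivision vertices; concatenating this length-$(t-3)$ subpath with the two cross-edges at its endpoints and the corresponding $B_{m-1}$-edge produces an induced $t$-cycle through $e$ (the case $m=0$ is trivial since $B_0=T_0$ is itself a $t$-cycle). If instead $e$ lies on a detour path $R_i$ added by the expansion, then one of the three candidate $t$-cycles
\[x_i\dd x^-_i\dd y^-_i\dd P_i\dd y_i\dd x_i,\quad x_i\dd x^+_i\dd y^+_i\dd P_i\dd y_i\dd x_i,\quad y_i\dd v^-_i\dd z^-_i\dd Q_i\dd z^+_i\dd v^+_i\dd y_i\]
(the last only when $i\in I$) contains $e$, and inducedness follows from the local structure of the handle.

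For part~(a), I distinguish four types of $e\in E(T)\setminus E(B)$. Recursion cross-edges $u_jv_j$ (joining $B_m$ and $B_{m-1}$) are handled by taking the two part-(b) $t$-cycles built on the two $B_m$-subpaths incident to $u_j$: they intersect only in $\{u_j,v_j\}$, so their union minus $e$ is an induced $(2t-2)$-cycle in $T_m-e$. Edges $e\in E(B_{m-1})$ are handled by invoking the inductive part~(b): the resulting induced $t$-cycle $H'\subseteq T_{m-1}$ through $e$ gives an induced $(t-1)$-path $H'{-}e$, which concatenates with the length-$(t-1)$ $B_m$-bypass (two cross-edges flanking a length-$(t-3)$ $B_m$-subpath) to yield the required cycle. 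Edges in $E(T_{m-1})\setminus E(B_{m-1})$ are handled by the inductive part~(a). Finally, the expansion edges not on $B$ --- the broken-boundary edges $x_ix^\pm_i$, the spokes $x_iy_i$, and the middle $P_i$-edges $y_iv^\pm_i$ (only when $i\in I$) --- are treated by explicit constructions inside the handle: for example, when $e=x_iy_i$ with $i\notin I$, the cycle $x_i\dd x^-_i\dd R_i\dd x^+_i\dd x_i$ has length $2t-2$ and its only $T$-chord is $x_iy_i$ itself; for $e=x_ix^-_i$ I would take $H'{-}e$ (where $H'\subseteq T_m$ is the part-(b) $t$-cycle through $e$) and splice it with the length-$(t-1)$ handle-detour $x_i\dd y_i\dd v^-_i\dd P_i\dd y^-_i\dd x^-_i$; the $i\in I$ and $y_iv^\pm_i$ cases are analogous, using $Q_i$ where needed.

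The main obstacle is checking inducedness in the expansion step. Since the naive cycle $x_i\dd x^-_i\dd R_i\dd x^+_i\dd x_i$ carries $x_iy_i$ as an intrinsic chord in $T$, it only handles $e=x_iy_i$ (with $i\notin I$), and the other cases require the splicing construction above; there one has to rule out accidental chords between the $T_m$-portion of the cycle (inherited from $H'$) and the handle portion of the detour. Observations~\ref{obs:expansiondetour1} and~\ref{obs:expansiondetour2} are designed precisely for this purpose: they show that vertices of $V(B)$ with neighbors in $V(T)\setminus V(B)$ are pairwise at $B$-distance at least $t-3$, which forces the only common vertices of $H'-e$ and the handle-detour to be the endpoints of $e$ itself, thereby ensuring inducedness.
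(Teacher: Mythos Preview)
The paper states this result as an ``Observation'' and gives no proof at all; the authors evidently regard both parts as routine checks against the recursive construction of $(T_m,B_m)$ and the definition of expansion. So there is no proof in the paper to compare against, and your plan is exactly the kind of verification the authors are leaving to the reader.

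Your case analysis is correct and complete. In particular: for part~(b) the three local $t$-cycles you list cover every edge of a detour $R_i$, and the $(t-3)$-arc-plus-two-spokes-plus-$B_{m-1}$-edge cycle handles every edge of $E(B)\cap E(B_m)$; for part~(a) your four types exhaust $E(T)\setminus E(B)$, and in each case the proposed $(2t-2)$-cycle has the right length and is induced. The inducedness checks in cases 1--3 go through because the only edges of $T_m$ between $V(T_{m-1})$ and $V(B_m)$ are the cross-edges $x_jx'_j$, and the only $T$-edges between $V(T_m)$ and a handle are the three attachment edges $x_i^-y_i^-,\,x_iy_i,\,x_i^+y_i^+$; this immediately rules out accidental chords once you note that the ``wrong-side'' handle vertex $y_i^+$ never appears in your detour.

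One small correction: your closing paragraph invokes Observations~\ref{obs:expansiondetour1} and~\ref{obs:expansiondetour2}, but they are not what you need here. Those observations concern the boundary $B$ of the \emph{expanded} territory, whereas the chord-freeness you are checking is between a piece of $T_m$ and a piece of a single handle, and it follows directly from the fact that only three edges join $V(T_m)$ to that handle. (Observations~\ref{obs:expansiondetour1}--\ref{obs:expansiondetour2} are used elsewhere in the paper, in the proof of Theorem~\ref{thm:mainevenfree}.) This is a cosmetic issue; the argument itself is sound.
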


We will also need two lemmas:

\begin{lemma}\label{lem:isgpreserved}
Let $K\in \mca{K}$, let $H$ be a connected induced subgraph of $T_K$ such that the vertices of $H$ are pairwise at distance at most $t-1$ in $H$. Then $H$ is an induced subgraph of $G_t$.
\end{lemma}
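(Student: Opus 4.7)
The plan is to show that for every pair of vertices $u, v \in V(H)$, adjacency in $G_t$ coincides with adjacency in $T_K$ (and hence with adjacency in $H$, since $H$ is induced in $T_K$). Only the ``only if'' direction needs work: assuming $uv \in E(G_t)$, the goal is to show $uv \in E(T_K)$.

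I would split into two cases. First, if at least one of $u, v$ lies in $V(T_K) \setminus V(K)$, then by the construction of $G_t$ (the set $V(T_K) \setminus V(K)$ is anticomplete in $G_t$ to $V(G_t) \setminus V(T_K)$), every edge of $G_t$ incident to that vertex lies in $T_K$, giving $uv \in E(T_K)$ at once. Second, suppose both $u, v \in V(K)$ and $uv \notin E(T_K)$. Any edge of $G_t$ lies in $\Gamma$ or in some $T_{K'}$. If $uv \in E(T_{K'})$ for some $K' \neq K$, then $u, v \in V(T_K) \cap V(T_{K'}) = V(K) \cap V(K')$, and since $K'$ is an induced cycle in $T_{K'}$, this forces $uv \in E(K') \subseteq E(\Gamma)$. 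Hence in every subcase one arrives at $uv \in E(\Gamma) \setminus E(K)$, where the exclusion $uv \notin E(K)$ follows from $uv \notin E(T_K)$ together with the fact that $K$ is an induced cycle in $T_K$.

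To derive a contradiction from $uv \in E(\Gamma) \setminus E(K)$, I plan to combine Theorem~\ref{thm:expansiondistance} with Lemma~\ref{lem:twocycles}. Since $H$ is a subgraph of $T_K$ and $\dist_H(u, v) \leq t-1$, we have $\dist_{T_K}(u, v) \leq t-1$; as $(T_K, K)$ is canonical, Theorem~\ref{thm:expansiondistance} yields $\dist_K(u, v) \leq t^{(t-1)+3t} = t^{4t-1}$. Let $L$ be a shortest path in $K$ from $u$ to $v$; its length is at most $t^{4t-1} < t^{5t-1}$. By Observation~\ref{obs:cycles}, there is a cycle $K' \in \mca{K} \setminus \{K\}$ with $uv \in E(K')$; let $L'$ be the one-edge subpath $u \dd v$ of $K'$, whose length is $1 < t^{5t-1}$. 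Since $\{u, v\} \subseteq V(L) \cap V(L')$, the first bullet of Lemma~\ref{lem:twocycles} (which would require a single intersection vertex) is ruled out, so the second bullet must hold: $u$ and $v$ are adjacent in $L$, which forces $uv \in E(L) \subseteq E(K)$, a contradiction.

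I do not expect a significant obstacle beyond organizing the case analysis; the argument is essentially a dovetailing of two already-established facts. The only subtle point is disentangling the ways in which an edge of $G_t$ between two vertices of $V(K)$ could arise (through another territory, as a chord in $\Gamma$ coming from the matching $M_i$, or honestly inside $T_K$) and reducing them all to the single problematic case $uv \in E(\Gamma) \setminus E(K)$, where the distance hypothesis $\dist_H(u,v) \leq t-1$ becomes strong enough, after passing through Theorem~\ref{thm:expansiondistance}, to apply the girth-sensitive Lemma~\ref{lem:twocycles}.
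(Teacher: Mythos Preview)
Your proposal is correct and follows essentially the same approach as the paper: reduce to the case $uv \in E(\Gamma) \setminus E(K)$ with $u, v \in V(K)$, then invoke Theorem~\ref{thm:expansiondistance} to bound $\dist_K(u,v)$. The only difference is in the closing step: the paper derives the contradiction directly from the girth of $\Gamma$ (the short path in $K$ together with the edge $uv$ would give a cycle of length at most $t^{4t-1}+1<g$ in $\Gamma$), whereas you route through Observation~\ref{obs:cycles} and Lemma~\ref{lem:twocycles}, which is an equally valid way to finish.
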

\begin{proof}
Suppose not. Then there is an edge $xy$ in $E(G_t)\setminus E(T_K)$ with $x,y\in V(H)\subseteq V(T_K)$. Let us first show that:

\sta{\label{st:chords} We have $x,y\in V(K)$, $xy\in E(\Gamma)\setminus E(K)$ and $\dist_K(x,y)\geq t^{5t-1}$.}

Let $\mca{K}' = \mca{K}\setminus\{K\}$. Since $xy\in E(G_t) \setminus E(T_K) = E(\Gamma)\cup (\bigcup_{K'\in \mca{K}'}E(T_{K'}))$, it follows that $x, y\in V(\Gamma)\cup (\bigcup_{K'\in \mca{K}'}V(T_{K'}))$. Thus, since $x, y \in V(T_K)$, it follows that:
$$x, y \in (V(\Gamma) \cap V(T_K))\cup \left( \bigcup_{K'\in \mca{K}'}(V(T_{K})\cap V(T_{K'}))\right) $$
and so
$$x, y \in V(K) \cup \left( \bigcup_{K'\in \mca{K}'}(V(K)\cap V(K'))\right)=V(K) \subseteq V(\Gamma).$$
Since $xy \in E(G_t)$ and $\Gamma$ is an induced subgraph of $G_t$, it follows that $xy \in E(\Gamma)$. Also, since $xy \notin E(T_K)$ and $K$ is an induced subgraph of $T_K$, it follows that $xy \notin E(K)$. In conclusion, we have $x,y\in V(K)$ and $xy\in E(\Gamma)\setminus E(K)$. It remains to show that $\dist_K(x,y)\geq t^{5t-1}$. Suppose not. Then there is a path $L$ of length smaller than $t^{5t-1}$ in $K$ from $x$ to $y$. Since $K$ is a subgraph of $\Gamma$ and since $xy\in E(\Gamma)\setminus E(K)$, it follows that $L$ is a path of length smaller than $t^{5t-1}$ in $\Gamma-xy$ from $x$ to $y$. But now $\Gamma[V(L)]$ contains a cycle of length at most $t^{5t-1}<t^{5t}=g$, contrary to the choice of $\Gamma$ with girth at least $g$. This proves \eqref{st:chords}.
\medskip

By \eqref{st:chords}, we have $x,y\in V(K)$ and $\dist_K(x,y)\geq t^{5t-1}$. On the other hand, since $x,y\in V(H)$ and $H$ is an induced subgraph of $T_K$, it follows that $\dist_{T_K}(x,y)\leq \dist_{H}(x,y)\leq t-1$. But now since $(T_K,K)$ is canonical, it follows from Theorem~\ref{thm:expansiondistance} that $\dist_{T_K}(x,y)\leq t^{7t-1}<t^{5t-1}$, a contradiction. This completes the proof of Lemma~\ref{lem:isgpreserved}.
\end{proof}

\begin{lemma}\label{lem:multidistance}
 Let $d\in \poi$, let $K\in \mca{K}$ and let $u\in V(T_K)$. Then there is a path $L$ in $K$ of length at most $t^{2d+3t}$ such that for every $x\in V(K)$ with $\dist_{T_K}(u,x)<d$, we have $x\in V(L)$.
\end{lemma}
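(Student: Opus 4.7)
The plan is to reduce the lemma to Theorem~\ref{thm:expansiondistance} by projecting $u$ onto the boundary cycle $K$. First I would introduce the set $S=\{x\in V(K):\dist_{T_K}(u,x)<d\}$ of boundary vertices that must be covered; if $S=\varnothing$, any single-vertex path of $K$ works and we are done. Otherwise, I would choose $u'\in V(K)$ minimizing $d_0:=\dist_{T_K}(u,u')$; the existence of an element of $S$ guarantees $d_0\leq d-1$.

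For every $x\in S$, the triangle inequality yields
$$\dist_{T_K}(u',x)\leq d_0+\dist_{T_K}(u,x)\leq (d-1)+(d-1)=2d-2.$$
Since $u',x\in V(K)$ and $(T_K,K)$ is canonical, Theorem~\ref{thm:expansiondistance} applied with parameter $2d-2$ gives
$$\dist_K(u',x)\leq t^{(2d-2)+3t}=t^{2d+3t-2}.$$
I would then let $L$ be an induced subpath of $K$ that contains every vertex at $K$-distance at most $t^{2d+3t-2}$ from $u'$: if the perimeter of $K$ exceeds $2t^{2d+3t-2}$, take the arc centered at $u'$ of that radius; otherwise take a Hamiltonian path of the cycle $K$. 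In either case $|E(L)|\leq 2t^{2d+3t-2}\leq t^{2d+3t}$ (using $t\geq 5$, so $2\leq t^2$), and $S\subseteq V(L)$ by construction, which is exactly what the lemma requires.

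The only real obstacle is the projection step. Because $u$ need not lie on $K$, one must replace it by a nearest boundary vertex $u'$, and the triangle inequality then essentially doubles the distance bound inside $T_K$; invoking Theorem~\ref{thm:expansiondistance} on this doubled distance is precisely what produces the exponent $2d+3t$ (rather than the $d+3t$ one would get if $u$ were on $K$ to begin with). Everything after the projection is a direct assembly from the triangle inequality and Theorem~\ref{thm:expansiondistance}.
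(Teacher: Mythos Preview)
Your proof is correct and follows essentially the same route as the paper: fix an anchor vertex on $K$ (the paper picks an arbitrary $x_0\in D$, you pick the nearest boundary vertex $u'$, which in fact lies in $S$), bound the $T_K$-distance from the anchor to every element of $S$ by the triangle inequality, and then apply Theorem~\ref{thm:expansiondistance} to convert this into a $K$-distance bound and hence a short covering arc. The only cosmetic difference is that your optimized choice of anchor yields $2d-2$ rather than $2d-1$ inside the exponent, which is immaterial for the stated bound.
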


\begin{proof}
    Let $D=\{x\in V(K):\dist_{T_K}(u,x)<d\}$. If $D=\varnothing$, then any path $L$ in $K$ satisfies the lemma. So we may assume that $D\neq \varnothing$. Fix a vertex $x_0\in D$. Then, for every $x\in D$, we have $\dist_{T_K}(x_0,x)\leq \dist_{T_K}(u,x_0)+\dist_{T_K}(u,x)<2d$. Since $(T_K,K)$ is canonical, it follows from Theorem~\ref{thm:expansiondistance} that for every $x\in D$, we have $\dist_K(x_0,x)\leq t^{2d+3t-1}$. Hence, there is a path $L$ in $K$ of length at most $2t^{2d+3t-1}<t^{2d+3t}$ such that $D\subseteq V(L)$. This completes the proof of Lemma~\ref{lem:multidistance}.
\end{proof}

We are now ready to prove Theorem~\ref{thm:mainevencrritical}, which we restate:

\mainevencrritical*

\begin{proof}
Assume that $e\in E(G_t)\setminus E(\Gamma)$. Then there exists $K\in \mca{K}$ such that $e\in E(T_K)\setminus E(\Gamma)$, and so $e\in E(T_K)\setminus E(K)$. Since $(T_K,K)$ is canonical, by Observation~\ref{obs:expansionbasic}\ref{obs:expansionbasic_a}, there is an induced $(2t-2)$-cycle in $T_K-e$. It follows that there is an induced subgraph $H$ of $T_K$ with $e\in E(H)$ such that $H-e$ is a $(2t-2)$-cycle. In particular, the vertices of $H$ are pairwise at distance at most $t-1$ in $H$. Therefore, by Lemma~\ref{lem:isgpreserved}, $H$ is an induced subgraph $G_t$, and so $H-e$ is an induced $(2t-2)$-cycle in $G_t-e$, as desired.

From now on, assume that $e\in E(\Gamma)$. Let $u,v$ be the ends of $e$. Without loss of generality, we may assume that $e\in M_3$, and so by Observation~\ref{obs:cycles}, there exist $K_1\in \mca{K}_1$ and $K_2\in \mca{K}_2$ such that $e\in E(K_1)\cap E(K_2)$.

Let $i\in \{1,2\}$ be fixed. Since $(T_{K_i}, K_i)$ is canonical, it follows from Observation~\ref{obs:expansionbasic}\ref{obs:expansionbasic_b} that there is an induced $t$-cycle $H_i$ in $T_{K_i}$ such that $e\in E(H_i)$. Thus, by Lemma~\ref{lem:isgpreserved}, $H_i$ is an induced subgraph of $G_t$, as well. Let $A_i=V(H_i)\cap V(K_i)$ and let $A'_i=V(H_i)\setminus A_i$. Then $u,v\in A_i$. Since $(T_{K_i},K_i)$ is canonical $u\in V(T_{K_i})$, by Lemma~\ref{lem:multidistance}, there is a path $L_i$ in $K_i$ of length at most $t^{5t-2}$ such that for every $x\in V(K_i)$ with $\dist_{T_{K_i}}(u,x)<t-1$, we have $x\in V(L_i)$. In particular, we have $A_i\subseteq V(L_i)$ because $u\in A_i\subseteq V(H_i)$ and $H_i$ is a $t$-cycle in $T_{K_i}$. 

Since $L_1,L_2$ both have length at most $t^{5t-2}<t^{5t-1}$, it follows from Lemma~\ref{lem:twocycles} that:

\sta{\label{st:antibits} $V(L_1)\setminus \{u,v\}$ and $V(L_2)\setminus \{u,v\}$ are anticomplete in $\Gamma$, and so in $G_t$.}

We further claim that:

\sta{\label{st:antibits2} $V(H_1)\setminus \{u,v\}$ and $V(H_2)\setminus \{u,v\}$ are anticomplete in $G_t$.}

From \eqref{st:antibits}, it follows that $A_1\setminus \{u,v\}\subseteq V(L_1)\setminus \{u,v\}$ and $A_2\setminus \{u,v\}\subseteq V(L_2)\setminus \{u,v\}$ are anticomplete in $G_t$. In particular, since $V(H_1)\cap V(H_2)\subseteq (V(H_1)\cap V(K_1))\cap (V(H_2)\cap V(K_2))=A_1\cap A_2$, it follows that $V(H_1)\setminus \{u,v\}$ and $V(H_2)\setminus \{u,v\}$ are disjoint. Moreover, note that $A'_1\subseteq V(T_{K_1})\setminus V(K_1)$ and $A'_2\subseteq V(T_{K_2})\setminus V(K_2)$ are anticomplete in $G_t$. So it remains to show that $A'_1$ and $A_2\setminus \{u,v\}$ are anticomplete in $G_t$ and $A'_2$ and $A_1\setminus \{u,v\}$ are anticomplete in $G_t$. Suppose not. Then we may assume without loss of generality that there is an edge $x_1x_2\in E(G_t)$ with $x_1\in A'_1\subseteq V(T_{K_1})\setminus V(K_1)$ and $x_2\in A_2\setminus \{u,v\}\subseteq V(K_2)$. Since $x_1x_2$ has an end in $V(T_{K_1})\setminus V(K_1)$, it follows that $x_1x_2\in E(T_{K_1})$, and so $x_2\in V(T_{K_1})\cap V(K_2)\subseteq V(K_1)\cap V(K_2)\subseteq V(K_1)$. Also, since $u,x_1\in V(H_1)$ and $H_1$ is a $t$-cycle in $T_{K_1}$ with $t\geq 5$, it follows that $\dist_{T_{K_1}}(u,x_1)\leq t-3$. This, along with the fact that $x_1x_2\in E(T_{K_1})$, implies that $\dist_{T_{K_1}}(u,x_2)\leq t-2$, and so $x_2\in V(L_1)\setminus \{u,v\}$. On the other hand, recall that $x_2\in A_2\setminus \{u,v\}\subseteq V(L_2)\setminus \{u,v\}$. But now $x_2\in (V(L_1)\setminus \{u,v\})\cap (V(L_2)\setminus \{u,v\})$, contrary to \eqref{st:antibits}. This proves \eqref{st:antibits2}.
\medskip

Since $H_1,H_2$ are induced $t$-cycles in $G_t$ with $e=uv\in E(H_1)\cap E(H_2)$, it follows from \eqref{st:antibits2} that $(H_1\cup H_2)-e$ is an induced $(2t-2)$-cycle in $G_t-e$. This completes the proof of Theorem~\ref{thm:mainevencrritical}.
\end{proof}

\bibliographystyle{abbrv}
\bibliography{ref}

\end{document}